\documentclass{article}
\usepackage{amsmath,amssymb,amsthm,enumerate,latexsym,comment,bm}
\usepackage[all]{xy}

\newtheorem{Lemma}{Lemma}[section]
\newtheorem{Proposition}[Lemma]{Proposition}

\theoremstyle{definition}

\newtheorem{Theorem}[Lemma]{Theorem}
\newtheorem{Definition}[Lemma]{Definition}
\newtheorem{Fact}[Lemma]{Fact}

\newtheorem{Remark}[Lemma]{Remark}
\newtheorem{Corollary}[Lemma]{Corollary}
\newtheorem*{Question*}{Question}

\newcommand\DistTo{\xrightarrow{
   \,\smash{\raisebox{-0.65ex}{\ensuremath{\scriptstyle\sim}}}\,}}

\newcommand{\D}{{\mathcal D}}

\newcommand{\g}{{\mathfrak g}}

\newcommand{\h}{{\mathfrak h}}
\newcommand{\q}{{\mathfrak q}}

\newcommand{\N}{\mathbb{N}}
\newcommand{\R}{\mathbb{R}}
\newcommand{\C}{\mathbb{C}}

\newcommand{\ve}{\varepsilon}

\newcommand{\ov}{\overline}

\newcommand{\p}{\partial}

\newcommand{\Tll}{T_{\lambda}^{l}}

\makeatletter
\@addtoreset{equation}{section} %セクションが変わるとカウンタがリセットする
\makeatother

\title{Dimension of the space of intertwining operators from degenerate principal series representations}
\author{Taito Tauchi\thanks{Graduate School of Mathematical Sciences, the University of Tokyo, Meguro-ku, Tokyo, 153-8914, Japan, E-mail address: taito@ms.u-tokyo.ac.jp
}}
\date{}
\begin{document}
\maketitle
\begin{abstract}
Let $X$ be a homogeneous space of a real reductive Lie group $G$. It was proved by T. Kobayashi and T. Oshima that the regular representation $C^{\infty}(X)$ contains each irreducible representation of $G$ at most finitely many times if a minimal parabolic subgroup $P$ of $G$ has an open orbit in $X$, or equivalently, if the  number of $P$-orbits on $X$ is finite. In contrast to the minimal parabolic case, for a general parabolic subgroup $Q$ of $G$, we find a new example that the regular representation $C^{\infty}(X)$ contains degenerate principal series representations induced from $Q$ with infinite multiplicity even when the number of $Q$-orbits on $X$ is finite. 
\end{abstract}
{\bf Keywords}:  degenerate principal series, multiplicity, spherical variety, intertwining operators, real spherical.\\
{\bf MSC2010;} primary 22E46; secondary 22E45, 53C30.

%\tableofcontents

\section{Introduction}
\label{Introduction}
Let $G$ be a real reductive algebraic Lie group, and $H$ an algebraic subgroup of $G$. T. Kobayashi and T. Oshima established the criterion of finite multiplicity for regular representations on $G/H$.
\begin{Fact}[{\cite[Theorem A]{KO}}]
\label{KO}
The following two conditions on the pair $(G,H)$ are equivalent:
\begin{enumerate}[(i)]
\item
$\dim {\rm Hom}_{G}
(\pi,C^{\infty}(G/H,\tau))<\infty$
for all $(\pi,\tau)\in \hat{G}_{{\rm smooth}}\times\hat{H}_{{\rm f}}$.
\item
$G/H$ is real spherical.
\end{enumerate}
\end{Fact}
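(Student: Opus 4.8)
Although Fact~\ref{KO} is quoted from \cite{KO}, here is the strategy I would follow to prove it. For the implication (ii) $\Rightarrow$ (i), the first move is to reduce to principal series. By the Casselman--Wallach theory every irreducible smooth representation $\pi$ is a quotient of a minimal principal series $I(\xi)=\mathrm{Ind}_{P}^{G}(\xi)$ with $P$ a minimal parabolic subgroup of $G$ (dualize Casselman's subrepresentation theorem), so that $\mathrm{Hom}_{G}(\pi,C^{\infty}(G/H,\tau))$ embeds into $\mathrm{Hom}_{G}(I(\xi),C^{\infty}(G/H,\tau))$, and it suffices to prove that the latter is finite-dimensional for all finite-dimensional $\xi$ and $\tau$. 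Writing $C^{\infty}(G/H,\tau)$ as a smooth induction $\mathrm{Ind}_{H}^{G}(\tau)$ and combining Frobenius reciprocity with the distribution realization of $I(\xi)^{\vee}$, one identifies this Hom-space with a space of distribution-valued sections on $G/H$ of a homogeneous vector bundle that are equivariant for the left $P$-action, equivalently with a space of $H$-equivariant twisted distributions on $G/P$.

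Such equivariant distributions are supported on unions of $P$-orbits in $G/H$. Assuming $G/H$ real spherical, one first invokes the equivalence between real sphericity and finiteness of the set of $P$-orbits --- itself a theorem, proved by real-algebraic arguments on the orbit structure --- and writes $\mathcal{O}_{1},\dots,\mathcal{O}_{N}$ for these orbits. The crux is then to bound, for each $j$, the dimension of the space of equivariant distributions supported on $\overline{\mathcal{O}_{j}}$ modulo those supported on the boundary $\overline{\mathcal{O}_{j}}\setminus\mathcal{O}_{j}$. Here the infinitesimal $P$-action turns these distributions into solutions of a system of linear differential equations whose characteristic variety is contained in the union of the conormal varieties $T^{*}_{\mathcal{O}_{j}}(G/H)$; finiteness of the number of orbits forces this union to be isotropic, in fact Lagrangian, so the system is holonomic, and after complexifying it is regular holonomic. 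Holonomicity bounds the transverse order of the distributions along each orbit and makes each associated graded piece a finite-dimensional representation of the isotropy group acting on the conormal directions, so summing over $j$ yields the finiteness asserted in (i). Making this rigorous requires a comparison between the smooth/distributional picture and the $\mathcal{D}$-module picture in the style of Kashiwara--Schmid, together with a local-to-global argument exploiting the $G$-action.

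For the converse (i) $\Rightarrow$ (ii) I would argue by contraposition. If $G/H$ is not real spherical then $P$ has no open orbit, so there is a non-constant $P$-invariant analytic function on an open set, equivalently a positive-dimensional family of $P$-orbits. One then exhibits a single irreducible $\pi$ --- realized inside a suitable, possibly degenerate, principal series --- together with an infinite-dimensional family of operators in $\mathrm{Hom}_{G}(\pi,C^{\infty}(G/H))$, obtained by pushing forward distributions living on the members of this moving family of positive-codimension orbits, the transverse (invariant) directions producing infinitely many linearly independent intertwiners. Equivalently, the failure of the Lagrangian condition produces a non-holonomic system with infinite-dimensional distributional solution space, and the representation is reconstructed from it.

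The principal obstacle is the holonomicity estimate in the second paragraph: controlling the transverse jet-order of the equivariant distributions along each orbit and the finite-dimensionality of the associated graded pieces, which simultaneously needs the equivalence ``real spherical $\Leftrightarrow$ finitely many $P$-orbits'' and the regular-holonomic $\mathcal{D}$-module machinery on the complexification, carefully transported back to the original category of smooth representations.
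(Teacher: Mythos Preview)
The paper does not contain a proof of Fact~\ref{KO}: it is stated as a \emph{Fact} and attributed to \cite{KO} (Kobayashi--Oshima), with no argument given in the present article. There is therefore nothing in this paper to compare your proposal against.

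For what it is worth, your outline is broadly consonant with the strategy actually used in \cite{KO}: for (ii)$\Rightarrow$(i) one does reduce to principal series, pass to $H$-equivariant distributions on $G/P$, invoke the finiteness of $P$-orbits (Fact~\ref{KM}), and control the solution space via holonomic systems; for (i)$\Rightarrow$(ii) one argues by contraposition, exhibiting infinitely many invariant distributions when there is no open $P$-orbit. That said, what you have written is explicitly a strategic sketch rather than a proof: the holonomicity estimate and the passage between the smooth/distributional category and the complexified $\mathcal{D}$-module picture are precisely the hard steps, and you have named them as obstacles rather than resolved them. Since the statement is quoted here only as background, a citation to \cite{KO} is all the paper requires.
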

Here $\hat{G}_{\rm{smooth}}$ denotes the set of equivalence classes of irreducible smooth admissible Fr\'echet representations of $G$ with 
moderate growth, and $\hat{H}_{{\rm f}}$ that of algebraic irreducible finite-dimensional representations of $H$. 
Given $\tau\in\hat{H}_{{\rm f}}$, 
we write $C^{\infty}(G/H,\tau)$ for the Fr\'echet space of smooth sections of the $G\mathchar`-$homogeneous vector bundle over $G/H$ associated to $\tau$.
The terminology {\it real sphericity} was introduced by T. Kobayashi \cite{RS} in his search of a broader framework for global analysis on homogeneous spaces than the usual (e.g., reductive symmetric spaces).
\begin{Definition}
A homogeneous space $G/H$ is {\it real spherical} if a minimal parabolic subgroup $P$ of $G$ has an open orbit in $G/H$.
\end{Definition}
The following equivalence is well known by the work of B. Kimelfeld \cite{Kimelfeld} and the real rank one reduction of T. Matsuki \cite{Matsuki}:
\begin{Fact}[{\cite[Theorem 2.2]{Bien}}]
\label{KM}
$G/H$ is real spherical if and only if the number of $H$-orbits on $G/P$ is finite. 
In other words, 
the condition (ii) in Fact \ref{KO} is equivalent to the following condition (iii):
\begin{enumerate}[(iii)]
\item
$\#(H\backslash G/P)<\infty$.
\end{enumerate}
\end{Fact}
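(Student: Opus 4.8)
The assertion is an equivalence, and I begin by fixing a minimal parabolic subgroup $P=MAN$ of $G$ and recording that $\#(H\backslash G/P)=\#(P\backslash G/H)$ via $g\mapsto g^{-1}$; thus condition (iii) is literally the statement that $P$ acts on $G/H$ with only finitely many orbits. The two implications are of quite different character, and I take them in turn.

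The implication (iii)$\Rightarrow$(ii) is the soft one. Because $G$, $H$ and $P$ are algebraic, $G/H$ is a smooth semialgebraic manifold whose $P$-orbits are finitely many locally closed semialgebraic submanifolds $\mathcal O_1,\dots,\mathcal O_k$. A finite union of semialgebraic submanifolds of dimension $<\dim G/H$ cannot cover $G/H$, so some $\mathcal O_i$ has full dimension; a locally closed submanifold of full dimension is open, so $P$ has an open orbit on $G/H$ and $G/H$ is real spherical. (Equivalently, the orbit-dimension function is lower semicontinuous, the union of the top-dimensional orbits is open, and each of them is closed in that open set because orbit boundaries consist of strictly lower-dimensional orbits, hence each is open.)

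The implication (ii)$\Rightarrow$(iii) is the substance; it is Kimelfeld's theorem \cite{Kimelfeld} together with Matsuki's real rank one reduction \cite{Matsuki}, and I would organise it in three steps. \emph{(a) The case $\mathrm{rank}_{\R}G=1$.} Then $G/P$ has very small dimension, and one shows by induction on dimension that an algebraic subgroup with an open orbit on $G/P$ has only finitely many orbits there: the boundary of the open orbit is a proper invariant closed semialgebraic subset whose strata are orbit spaces of rank one or rank zero data to which the inductive hypothesis applies, sphericity being inherited by these strata; this is Kimelfeld's case analysis. \emph{(b) The reduction.} For each simple restricted root $\alpha$ let $G_{(\alpha)}\supseteq MA$ be the real rank one subgroup attached to $\pm\alpha$, with minimal parabolic $P_{(\alpha)}\supseteq P$. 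Matsuki's argument controls the $P$-orbit stratification of $G/H$ by passing, along the closure order and starting from the open $P$-orbit, to $G_{(\alpha)}$-orbits: on each relevant $G_{(\alpha)}$-orbit the parabolic $P_{(\alpha)}$ again has an open orbit (this is the crux), so step (a) produces only finitely many new $P$-orbits at each stage, and an induction on the well-founded closure order bounds $\#(P\backslash G/H)$. \emph{(c) Conclusion.} Real sphericity supplies the open $P$-orbit, and (a)--(b) upgrade it to finiteness, i.e.\ $\#(H\backslash G/P)<\infty$. The main obstacle is step (b): it is not a dimension count but a structural statement about how the orbit stratification propagates from the open orbit through its boundary, controlled uniformly by the rank one subgroups $G_{(\alpha)}$; establishing that the relevant $G_{(\alpha)}$-orbits inherit an open $P_{(\alpha)}$-orbit, and bookkeeping the induction, is the technical heart, and I would import these details from \cite{Matsuki} and \cite{Bien} rather than redo the finite case analysis of (a).
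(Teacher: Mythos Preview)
The paper does not prove this statement at all: it is recorded as a \emph{Fact} with a citation to \cite[Theorem 2.2]{Bien}, preceded by the remark that the equivalence ``is well known by the work of B.~Kimelfeld \cite{Kimelfeld} and the real rank one reduction of T.~Matsuki \cite{Matsuki}.'' There is therefore no in-paper proof to compare your attempt against.

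That said, your outline is faithful to the strategy the paper points to. The easy direction (iii)$\Rightarrow$(ii) is correct as written. For (ii)$\Rightarrow$(iii) you correctly identify the two ingredients the paper names---Kimelfeld's analysis and Matsuki's rank-one reduction---and you are right that the heart of the matter is showing that sphericity propagates to the rank-one pieces so that the induction along the closure order terminates. Your description of step~(a) as an ``induction on dimension'' for the rank-one case is a bit loose (Kimelfeld's argument is more of a direct structural classification than a dimension induction), but since you explicitly defer the details to \cite{Kimelfeld}, \cite{Matsuki}, and \cite{Bien}, this is not a gap so much as an honest pointer to the literature---which is exactly what the paper itself does.
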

Therefore, for a minimal parabolic $P$, the three conditions (i), (ii), and (iii) are equivalent by Fact \ref{KO} and Fact \ref{KM} (see Figure 1.1 below).  Then one might ask a question what will happen to the relationship among the three conditions, if we replace $P$ by a general parabolic subgroup $Q$ of $G$.
For this, we need to make a precise definition of variants of (i), (ii), and (iii) for a parabolic subgroup $Q$ of $G$.
\begin{Definition}[{\cite[Definition 6.6]{Kobshintani}}]
We say $\pi \in \hat{G}_{\rm smooth}$ belongs to $Q${\it -series} if $\pi$ occurs as a subquotient of the degenerate principal series representation $C^{\infty}(G/Q,\tau)$ for some $\tau \in \hat {Q}_{\rm f}$.
\end{Definition}
$
\hspace{-0.2cm}\quad\quad P:{\rm minimal\: parabolic}\quad\quad \quad\hspace{2.0cm} Q:{\rm general\:parabolic}\\
\hspace{1cm}
\xymatrix{
& (\rm{i}) \ar@{<=>}[ld]_{{\rm Fact \ref{KO}}} \ar@{<=>}[rd] & \\
(\rm{ii}) \ar@{<=>}[rr]_{{\rm Fact \ref{KM}}}& & (\rm{iii})
}\hspace{2cm}
$
$
\xymatrix{
& (\rm{i}_{Q}) \ar@<0.1cm>@{=>}[ld]|{\vspace{0.2cm}\hole\hspace{0.4cm}{\rm Fact.\ref{K}}}
 \ar@<0.1cm>@{<=}[rd]|{\hole\hspace{-0.2cm}{\rm No}}^{\hspace{0.25cm}{\rm Thm.\ref{Main}}} & \\
(\rm{ii}_{Q}) \ar@<0.2cm>@{=>}[ru]|{\hole\hspace{-0.3cm}{\rm No}}^{{\rm Thm.\ref{Main}}\hspace{0.2cm}} 
\ar@<0.15cm>@{<=}[rr]& &
 (\rm{iii}_{Q}) \ar@<0.1cm>@{<=}[ll]|{\hole\hspace{-0.2cm}{\rm No}}
}
$
\begin{center}\hspace{0cm}Figure 1.1\hspace{5.0cm}Figure 1.2\end{center}
We set $\hat{G}_{{\rm smooth}}^{Q}:=\{\pi\in\hat{G}_{\rm smooth}\mid \pi {\rm \: belongs\: to\:}
Q\mathchar`-{\rm series.}\}$.
Obviously, $\hat{G}_{{\rm smooth}}^{Q}\supset \hat{G}_{{\rm smooth}}^{Q'}$ if $Q\subset Q'$.
Moreover,
$\hat{G}^{Q}_{\rm smooth}$ is equal to $\hat{G}_{\rm smooth}$ if $Q=P$ (minimal parabolic) by Harish-Chandra's subquotient theorem \cite{subquo} and to $\hat{G}_{{\rm f}}$ if $Q=G$.
\begin{Definition}
For a parabolic subgroup $Q$ of $G$, we define the three conditions (i$_{Q}$), (ii$_{Q}$), and (iii$_{Q}$), respectively, as follows:
\begin{enumerate}[(i$_{Q}$)]
\item
 $\dim {\rm Hom}_{G}
(\pi,C^{\infty}(G/H,\tau))<\infty$
for all $(\pi,\tau)\in \hat{G}^{Q}_{{\rm smooth}}\times\hat{H}_{{\rm f}}$.
\item
$Q$ has an open orbit in $G/H$.
\item
$\#(H\backslash G/Q)<\infty$.
\end{enumerate}
\end{Definition}
The conditions (i$_{Q}$), (ii$_{Q}$), and (iii$_{Q}$) reduce to (i), (ii), and (iii), respectively, if $Q=P$ (minimal parabolic), and we know from Fact \ref{KO} and Fact \ref{KM} (see also Figure 1.1) that the following equivalences hold:
\begin{align*}
({\rm i}_{Q}) \iff ({\rm ii}_{Q}) \iff ({\rm iii}_{Q})\quad&{\rm if}\quad Q=P.\\
\intertext{Further, it is obvious from the Frobenius reciprocity that the condition (i$_{Q}$) automatically holds if $Q=G$; (ii$_{Q}$) and (iii$_{Q}$) obviously hold. Hence}
({\rm i}_{Q}) \iff ({\rm ii}_{Q}) \iff ({\rm iii}_{Q})\quad& {\rm if}\quad Q=G.
\end{align*}
In the general setting, clearly, (iii$_{Q}$) implies (ii$_{Q}$), however the converse may fail if $Q$ is not a minimal parabolic subgroup of $G$.
On the other hand, the implication (i$_{Q}$) $\Rightarrow$ (ii$_{Q}$) is true. In fact, the following stronger theorem holds:
\begin{Fact}[{\cite[Corollary 6.8]{Kobshintani}}]
\label{K}
If there exists $\tau \in \hat{H}_{\rm f}$ such that for all $\pi \in \hat{G}^{Q}_{\rm smooth}$ $\dim {\rm Hom}_{G}(\pi, C^{\infty}(G/H,\tau))
<\infty$, then (ii$_{Q}$) holds.
\end{Fact}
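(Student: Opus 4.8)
The plan is to prove the contrapositive: assuming that $Q$ has \emph{no} open orbit in $G/H$ (i.e.\ (ii$_{Q}$) fails), I will show that for \emph{every} $\tau\in\hat H_{\rm f}$ there is some $\pi\in\hat G^{Q}_{\rm smooth}$ with $\dim{\rm Hom}_{G}(\pi,C^{\infty}(G/H,\tau))=\infty$. First I reduce to a single degenerate principal series: each $C^{\infty}(G/Q,\sigma)$ with $\sigma\in\hat Q_{\rm f}$ is a smooth Fr\'echet representation of moderate growth (as $G/Q$ is compact) of finite length, all of whose irreducible subquotients lie in $\hat G^{Q}_{\rm smooth}$ by definition; since ${\rm Hom}_{G}(-,C^{\infty}(G/H,\tau))$ is left exact, a composition-series argument gives $\dim{\rm Hom}_{G}(C^{\infty}(G/Q,\sigma),C^{\infty}(G/H,\tau))\le\sum_{i}\dim{\rm Hom}_{G}(\pi_{i},C^{\infty}(G/H,\tau))$, summed over the subquotients $\pi_{i}$. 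Hence it suffices to exhibit one $\sigma\in\hat Q_{\rm f}$, allowed to depend on $\tau$, for which $\dim{\rm Hom}_{G}(C^{\infty}(G/Q,\sigma),C^{\infty}(G/H,\tau))=\infty$.

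Next I translate this into a statement about invariant distributions. By the smooth Frobenius reciprocity for $C^{\infty}(G/H,\tau)={\rm Ind}_{H}^{G}\tau$ (evaluation at the base point has a smooth inverse), ${\rm Hom}_{G}(C^{\infty}(G/Q,\sigma),C^{\infty}(G/H,\tau))\cong{\rm Hom}_{H}(C^{\infty}(G/Q,\sigma)|_{H},\tau)$, which, $\tau$ being finite-dimensional, is by duality the space of $H$-equivariant, $\tau$-valued distributions on $G/Q$ with coefficients in $\sigma^{\vee}\otimes\mathrm{Dens}(G/Q)$. I emphasize that this realizes the intertwining operators as genuine maps into $C^{\infty}(G/H,\tau)$, not merely into a space of distributions. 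So the goal becomes: choose $\sigma$ making this space of $H$-invariant distributions on $G/Q$ infinite-dimensional.

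Now the geometry. Since $G/Q$ is a compact manifold, and a manifold is not a finite (even countable) union of submanifolds of strictly smaller dimension, the failure of (ii$_{Q}$) --- equivalently the absence of an open $H$-orbit in $G/Q$ --- forces $\#(H\backslash G/Q)=\infty$ (so (iii$_{Q}$) fails too). Using the real-algebraic stratification of $G/Q$ by $H$-orbit type, some stratum then contains a positive-dimensional family $\{\mathcal O_{t}\}_{t\in(0,1)}$ of equidimensional $H$-orbits with mutually conjugate stabilizers $H_{x_{t}}$. I would then build, for a suitable $\sigma$, a continuously varying family $\mu_{t}$ of $H$-equivariant $\tau$-valued distributions with $\mu_{t}$ supported on $\overline{\mathcal O_{t}}$; since the orbits are equidimensional, $\mathcal O_{t_{0}}\not\subset\overline{\mathcal O_{t}}$ for $t\ne t_{0}$, so $\eta\mapsto\int_{0}^{1}\eta(t)\,\mu_{t}\,dt$ embeds $C^{\infty}_{c}(0,1)$ into ${\rm Hom}_{H}(C^{\infty}(G/Q,\sigma),\tau)$, giving the required infinite-dimensionality. (If the only admissible $\sigma$ produce no such ``leading'' distribution along the family, one instead works along a single non-open orbit $\mathcal O_{0}=H/H_{x_{0}}$: its conormal space $N_{x_{0}}^{*}\ne0$, the transversal-order filtration of the distributions supported on $\mathcal O_{0}$ has $k$-th graded piece ${\rm Hom}_{H_{x_{0}}}(F,S^{k}N_{x_{0}}^{*})$ for a fixed finite-dimensional $H_{x_{0}}$-module $F=F(\sigma,\tau)$, and it suffices that this be nonzero for infinitely many $k$.) Either way, combining with the reduction above completes the proof.

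The \textbf{main obstacle} is exactly the non-vanishing in this last step: a priori every candidate distribution could be identically zero because the relevant ``symbol'' --- an $H_{x_{t}}$- (resp.\ $H_{x_{0}}$-) homomorphism space built from $\sigma$, $\tau$, and the modular and normal data --- might vanish for all permissible $\sigma$. This is where it is essential that $Q$ is \emph{parabolic}: $\hat Q_{\rm f}$ contains the real one-parameter families of characters coming from the split center of a Levi factor of $Q$, and these continuous parameters can be tuned so that the symbol condition is met (uniformly along the family, or for infinitely many transversal orders along a fixed orbit). This is the analogue, for a general parabolic $Q$, of the Kobayashi--Oshima construction behind Fact~\ref{KO} in the minimal-parabolic case; the one subtlety beyond Fact~\ref{KO} is that the infinite-multiplicity representation $\pi$ must be kept inside the smaller family $\hat G^{Q}_{\rm smooth}$, which is why the construction is performed on $G/Q$ throughout --- a direct appeal to Fact~\ref{KO} would only yield some $\pi\in\hat G_{\rm smooth}$, not necessarily a member of the $Q$-series.
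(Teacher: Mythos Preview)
The paper does not prove this statement: it is quoted verbatim as a \emph{Fact} from \cite[Corollary~6.8]{Kobshintani} and is used only as input to the discussion in the Introduction. So there is no ``paper's own proof'' to compare against; what follows is an assessment of your argument on its own merits.

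Your outline has the right architecture --- contrapositive, reduction from $\hat G^{Q}_{\rm smooth}$ to a single degenerate principal series via finite length and left exactness, Frobenius reciprocity to land in $H$-invariant distributions on $G/Q$, and then a geometric construction exploiting the absence of an open orbit --- and these reductions are correct as stated. The genuine gap is exactly the step you yourself flag as ``the main obstacle'': you have not actually produced, for a given $\tau$, a choice of $\sigma\in\hat Q_{\rm f}$ and an infinite linearly independent family of $H$-equivariant $\tau$-valued distribution sections on $G/Q$. Both of your proposed mechanisms are only gestured at. For the one-parameter-family construction you need, for each $t$, a nonzero $H$-equivariant distribution supported on $\overline{\mathcal O_t}$, varying measurably in $t$; this amounts to a nonzero element of ${\rm Hom}_{H_{x_t}}(\sigma_{2\rho}^{\vee}|_{H_{x_t}},\tau|_{H_{x_t}})$ (at transversal order $0$), and you have not shown that any $\sigma\in\hat Q_{\rm f}$ makes this nonzero, let alone uniformly in $t$. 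For the single-orbit/transversal-order mechanism, you need infinitely many $k$ with ${\rm Hom}_{H_{x_0}}(F,S^{k}N_{x_0}^{*})\ne 0$; again this is asserted, not proved, and the sentence ``these continuous parameters can be tuned so that the symbol condition is met'' is precisely the content of the theorem, not an argument for it. In short, what you have written is a plausible proof \emph{plan} that correctly isolates where the work lies, but it is not a proof: the existence of the required $\sigma$ is the substance of Kobayashi's result, and your text does not supply it.
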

An open problem is whether the converse statement holds or not.
\begin{Question*}
Does
the finite-multiplicity condition (i$_{Q})$ in representation theory
follows from 
the geometric condition (ii$_{Q}$) (or (iii$_{Q}))$?
\end{Question*}
We give a negative answer to this question in this paper. Explicitly, we prove the theorem below:
\begin{Theorem}
\label{Main}
Let $Q$ be a maximal parabolic subgroup of $G=SL(2n,\R)$ such that
$G/Q$ is isomorphic to the real projective space $\R\mathbb{P}^{2n-1}$.
Then if $n\geq 2$, there exists an algebraic subgroup $H$ of $G$ satisfying the following two conditions:
\begin{enumerate}[1)]
\item
$\#(H\backslash G/Q)<\infty$,
\item
$\dim {\rm Hom}_{G}(C^{\infty}(G/Q,\chi),C^{\infty}(G/H))=\infty
$
for some one-dimensional representation $\chi$ of $Q$.
\end{enumerate}
Furthermore, if $n\geq 3$, $H$ satisfies the following condition:
\begin{enumerate}[2')]
\item
$\dim {\rm Hom}_{G}(C^{\infty}(G/Q,\chi),C^{\infty}(G/H))=\infty
$
for any one-dimensional representation $\chi$ of $Q$.
\end{enumerate}
\end{Theorem}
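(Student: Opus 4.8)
The plan is to translate the two displayed statements into a question about invariant distributions, to see why finiteness of the number of orbits (condition 1)) does \emph{not} by itself bound the dimension in 2)/2'), and then to produce an explicit $H$ whose orbit stratification on $\RP^{2n-1}$ realizes this phenomenon.

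I would fix $Q$ to be the stabilizer in $G=SL(2n,\R)$ of the line $\R e_{1}\subset\R^{2n}$, so $G/Q=\RP^{2n-1}$ and a one-dimensional $\chi$ of $Q$ is (a complex power of) the character by which $Q$ acts on $\R e_{1}$, with associated $G$-homogeneous line bundle $\mathcal{L}_{\chi}\to\RP^{2n-1}$. Writing the smoothly induced module $C^{\infty}(G/H)=\mathrm{Ind}_{H}^{G}\mathbf{1}$ and using the Frobenius reciprocity ($\mathrm{Ind}$ right adjoint to restriction),
\[
{\rm Hom}_{G}\big(C^{\infty}(G/Q,\chi),\,C^{\infty}(G/H)\big)\;\cong\;\big(C^{\infty}(G/Q,\chi)^{*}\big)^{H},
\]
the space of continuous $H$-invariant functionals on the degenerate principal series, i.e.\ the space of $H$-invariant distribution sections of $\mathcal{L}_{\chi}^{\vee}\otimes(\text{density bundle})$ over $\RP^{2n-1}$. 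Thus condition 1) just says $H$ has finitely many orbits on $\RP^{2n-1}$, and 2)/2') say that this space of invariant distributions is infinite-dimensional for a suitable, respectively every, $\chi$. (Since $C^{\infty}(G/Q,\chi)$ has finite length with all composition factors in $\hat{G}^{Q}_{\rm smooth}$, this in particular gives the negative answer to the Question.)

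Next I would analyse the invariant distributions through the $H$-orbit stratification. Filtering $\mathcal{D}'\big(\RP^{2n-1},\mathcal{L}_{\chi}^{\vee}\otimes(\text{densities})\big)$ by closure of support and by transverse order, the associated graded along a locally closed orbit $\mathcal{O}=H/H_{p}$ of codimension $c$ is $\bigoplus_{k\ge 0}\mathcal{D}'\big(\mathcal{O},\ \mathrm{Sym}^{k}(N^{*}_{\mathcal{O}})\otimes E_{0}\big)$ for a fixed ($k$-independent) $H$-equivariant line bundle $E_{0}$ built from $\mathcal{L}_{\chi}$ and the density bundles; because $\mathcal{O}$ is homogeneous each summand is at most one-dimensional and equals ${\rm Hom}_{H_{p}}\big(\mathbf{1},\ \mathrm{Sym}^{k}(N^{*}_{p})\otimes (E_{0})_{p}\big)$. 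Hence the total dimension is infinite exactly when for some orbit this $H_{p}$-invariant is non-zero for infinitely many $k$; the crucial point is that a finite number of strata does not forbid this, the sum over $k$ being an infinite direct sum. For a \emph{closed} non-open orbit $\mathcal{O}$ the filtration splits (distributions supported on a closed submanifold are finite sums of transverse derivatives of the delta-section), so a single such "resonant" closed orbit already forces $\dim{\rm Hom}_{G}(C^{\infty}(G/Q,\chi),C^{\infty}(G/H))=\infty$.

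It remains to construct, for each $n\ge 2$, an algebraic $H\subset SL(2n,\R)$ with finitely many orbits on $\RP^{2n-1}$ and a resonant closed non-open orbit, and for $n\ge 3$ to make the resonance independent of $\chi$. I would look for $H$ preserving just enough linear-algebraic structure on $\R^{2n}$ to keep the number of orbits on lines finite, yet small enough that $G/H$ is \emph{not} real spherical (so Fact \ref{KO} and Fact \ref{KM} already guarantee infinite multiplicity for some representation, which the orbit analysis then pins to the $Q$-series). The resonance mechanism I expect: at a point $p$ of a closed orbit the isotropy $H_{p}$ acts on the (co)normal space through a character $\chi_{0}$ that occurs there together with its inverse $\chi_{0}^{-1}$; then $\mathrm{Sym}^{k}(N^{*}_{p})$ contains the $H_{p}$-weight $\chi_{0}^{m}$ for all $m\equiv k\ (\mathrm{mod}\ 2)$ with $|m|\le k$, so for the right relation between $\chi_{0}$ and the twist character the invariant is non-zero for all large $k$ of one parity — for every $\chi$ when such a "hyperbolic pair" is present in the normal space (the higher-codimension configuration available once $n\ge 3$), and only for the $\chi$ of matching parity when the relevant conormal is a single line on which $H_{p}$ acts through a character of order two (the codimension-one configuration occurring already at $n=2$). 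Carrying out the orbit count on $\RP^{2n-1}$, computing the isotropy representations $N^{*}_{p}$ and the bundle $E_{0}$ explicitly, and checking that the resonant jets integrate to genuine $H$-invariant distributions (automatic for a closed orbit) are the routine but substantial steps; choosing the right $H$ and proving its closed orbit is resonant is the main obstacle, and is exactly where the thresholds $n\ge 2$ and $n\ge 3$ enter.
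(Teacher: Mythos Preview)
Your reduction to $H$-invariant distribution sections on $G/Q$ is correct and matches the paper (its Corollary~\ref{H}, via Fact~\ref{GPGP}), and your diagnosis that finiteness of orbits does not bound the sum over transverse orders $k$ is exactly the phenomenon being exploited. However, the proposal stops precisely where the proof begins: you never produce an $H$. Saying you ``would look for $H$ preserving just enough linear-algebraic structure'' and that ``choosing the right $H$ \ldots is the main obstacle'' is an accurate description of the difficulty, but it is the entire content of the theorem. Without a candidate $H$, the Bruhat-filtration analysis of $\mathrm{Sym}^{k}(N^{*}_{p})$ cannot be carried out, and the heuristic about a $(\chi_{0},\chi_{0}^{-1})$ pair in the conormal, while plausible, is not a proof until it is checked against a concrete isotropy action.

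By contrast, the paper does not argue through the Bruhat filtration at all. It identifies $\R^{2n}$ with $\C^{n}$, builds $H$ as an explicit upper-triangular subgroup of $M_{n}(R_{\ve})\simeq M_{2n}(\R)$ for the Clifford algebra $R_{\ve}=C(1,1)$, verifies directly that $\#(H\backslash\RP^{2n-1})=n$, and then \emph{writes down} an infinite family of $H$-invariant homogeneous distributions $T^{l}_{\lambda}=D^{l}\big(|z_{n-1}|^{2-\lambda}\delta(z_{n},\ov{z}_{n})\big)/\Gamma(2-\lambda/2)$, where $D=\ov{z}_{n-2}\partial_{\ov{z}_{n-1}}+z_{n-1}\partial_{z_{n}}$ is a vector field whose conjugates under $H$ differ from $D$ only by terms that annihilate the seed distribution. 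Linear independence is immediate from the transverse order in $\partial_{z_{n}}$. Two points where your sketch diverges from what actually happens: the support of $T^{l}_{\lambda}$ is the closure of the \emph{non-closed} orbit $Y_{2n-3}$, so restricting attention to closed orbits would miss the construction for $n\ge 3$; and the threshold $n\ge 3$ versus $n=2$ is not a codimension-one versus higher-codimension dichotomy in the conormal, but the need for the extra variable $z_{n-2}$ in $D$ to absorb the $\lambda$-dependence (for $n=2$ one must take $\lambda=2$ so that $|z_{n-1}|^{2-\lambda}\equiv 1$). Your side remark that each graded summand is ``at most one-dimensional'' is also not right once the codimension exceeds one.
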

We summarize the relationship among the conditions (i$_{Q}$), (ii$_{Q}$), and (iii$_{Q}$) as follows: (i$_{Q}$) $\Rightarrow$ (ii$_{Q}$) is true by Fact \ref{K}. Theorem \ref{Main} implies that neither (iii$_{Q}$) $\Rightarrow$ (i$_{Q}$) nor (ii$_{Q}$) $\Rightarrow$ (i$_{Q}$) holds, see Figure 1.2. 
\begin{Remark}
The recent paper \cite[Theorem D]{G} claimed the following: {\it
Suppose that a real algebraic group $H$ acts on a real  algebraic smooth variety $M$ with $\#(H\backslash M)<\infty$ and that $E$ is an algebraic $H$-homogeneous vector bundle on $M$. Then, for any $n\in\N$,
\begin{eqnarray}
\sup_{\tau\in\hat{H}_{\rm f}\atop
\dim\tau=n}
\dim {\rm Hom}_{H}(\tau, {\mathcal S}^{*}(M,E))
<\infty.
\label{S}
\end{eqnarray}
}
We note that ${\mathcal S}^{*}(M,E)$ 
can be identified with the space
$\D'(M)$ of distributions in the case that $M$ is compact 
and $E$ is the trivial bundle $M\times\C$ \cite[Chapter 1.5]{SN}.
Therefore $(\ref{S})$ would imply 
\begin{eqnarray}
\dim {\rm Hom}_{H}({\bf 1},\D'(M))=\dim \D'(M)^{H}<\infty,
\label{Gou}
\end{eqnarray}
when $\#(H\backslash M)<\infty$ and $M$ is compact. Here ${\bf 1}$ denotes the trivial
one-dimensional representation of $H$.

However, one sees from Fact \ref{GPGP} that (\ref{Gou}) contradicts to Theorem \ref{Main},
when applied to $M=\R{\mathbb P}^{2n-1}$. 
Thus Theorem \ref{Main} is a counterexample to \cite[Theorem D]{G}.
Indeed, it seems to the author that a gap in the proof of \cite[Theorem D]{G}
comes from a false statement 
$\#(H\backslash G/Q)<\infty \Rightarrow \#(H_{\C}\backslash G_{\C}/Q_{\C})<\infty$,
see Remark \ref{holonomic} below.
\end{Remark}
The outline of this article as follows: 
In Section \ref{Reduction to distribution},
we recall some general facts concerning distribution kernels,
which were proved by T. Kobayashi and B. Speh \cite{KS}. 
In Section \ref{Preliminary}, we fix some basic notation for distributions on the complex Euclidean space.
In Section \ref{Construction}, we construct the subgroup $H$ of $G$ and give a proof of Theorem \ref{Main}.
\section{Reduction to distribution kernels}
\label{Reduction to distribution}
In this section, we reformulate the condition 2) of Theorem \ref{Main} by means of distribution kernels using Fact \ref{GPGP} below.
\begin{Definition}
Let $G$ be a real Lie group and $H$ a closed subgroup of $G$.  For $\tau \in \hat{H}_{{\rm f}}$, we define the finite-dimensional representation of $H$ by $\tau_{2\rho}^{\vee}:=\tau^{\vee}\otimes\C_{2\rho}$ where $\tau^{\vee}$ is the contragredient representation of $\tau$ and
$\C_{2\rho}$ denotes the one-dimensional representation of $H$ given by $h\mapsto \left|\: \det ({\rm Ad}(h):\g/\h\to\g/\h)\right|^{-1}$.\end{Definition}
\begin{Fact}[{\cite[Proposition\:3.2]{KS}}]
\label{GPGP}
Let $G$ be a real Lie group. Suppose that $G'$ and $H$ are closed subgroups of $G$ and that 
$H'$ is a closed subgroup of $G'$. Let $\tau$ and $\tau'$ be finite-dimensional representations of $H$ and $H'$, respectively.
\begin{enumerate}[(1)]
\item
There is a natural injective map:
\begin{eqnarray}
{\rm Hom}_{G'}\left(C^{\infty}(G/H,\tau),C^{\infty}(G'/H',\tau')\right)
\hookrightarrow
\left(\D'(G/H,\tau_{2\rho}^{\vee})\otimes \tau'\right)^{H'}.
\label{surj}
\end{eqnarray}
Here $\left(\D'(G/H,\tau_{2\rho}^{\vee})\otimes \tau'\right)^{H'}$ denotes 
the space of $H'$-fixed vectors under the diagonal action.
\item
If $H$ is cocompact in $G$ (e.g., a parabolic subgroup of $G$ or a uniform lattice), then (\ref{surj}) is a bijection.
\end{enumerate}
\end{Fact}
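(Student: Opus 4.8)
The plan is to realize the map in (\ref{surj}) concretely as \emph{evaluation of the image of the operator at the base point} $o':=eH'\in G'/H'$, and to extract both assertions from the $G'$-equivariance together with the duality between smooth sections and compactly supported distribution sections. The conceptual backbone is the Schwartz kernel theorem, which identifies continuous operators with distribution kernels on the product and turns $G'$-equivariance into diagonal $G'$-invariance of the kernel; the equivariance then collapses the kernel onto the fibre over $o'$, and the self-contained evaluation description below is what I would use directly. To set up the analysis, recall that $C^{\infty}(G/H,\tau)$ is a Fr\'echet space whose continuous dual is canonically ${\mathcal E}'(G/H,\tau_{2\rho}^{\vee})$, the space of compactly supported distribution sections: the twist by $\C_{2\rho}$ is exactly the density bundle of $G/H$ (since $\C_{2\rho}$ is the character $h\mapsto|\det({\rm Ad}(h):\g/\h\to\g/\h)|^{-1}$), so there is a canonical $G$-invariant pairing $C^{\infty}(G/H,\tau)\times\D'(G/H,\tau_{2\rho}^{\vee})\to\C$ given by integrating the resulting density, restricting to a perfect pairing with ${\mathcal E}'(G/H,\tau_{2\rho}^{\vee})$. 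Given $T\in{\rm Hom}_{G'}(C^{\infty}(G/H,\tau),C^{\infty}(G'/H',\tau'))$, I would set $u_{T}(f):=(Tf)(o')\in\tau'$; as evaluation at $o'$ is continuous on $C^{\infty}(G'/H',\tau')$, this places $u_{T}$ in ${\mathcal E}'(G/H,\tau_{2\rho}^{\vee})\otimes\tau'\subset\D'(G/H,\tau_{2\rho}^{\vee})\otimes\tau'$.

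For part (1), the stabiliser of $o'$ in $G'$ is $H'$, acting on the fibre there by $\tau'$. Writing $\pi$ and $\ell$ for the left regular actions on the two section spaces and using $(\ell_{h'}\phi)(o')=\tau'(h')\phi(o')$ for $h'\in H'$, the equivariance of $T$ gives $u_{T}(\pi(h')f)=\tau'(h')\,u_{T}(f)$. Unwinding the diagonal $H'$-action on $\D'(G/H,\tau_{2\rho}^{\vee})\otimes\tau'$ (where $H'\subset G'\subset G$ acts on the first factor through the ambient $G$-action) shows this is precisely the condition $u_{T}\in(\D'(G/H,\tau_{2\rho}^{\vee})\otimes\tau')^{H'}$, so $T\mapsto u_{T}$ is the asserted natural map. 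For injectivity, the same equivariance yields $(Tf)(g'o')=u_{T}(\pi(g'^{-1})f)$ for all $g'\in G'$; hence if $u_{T}=0$ then $Tf$ vanishes on all of $G'/H'$ by transitivity, so $T=0$.

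For part (2) I would produce an explicit inverse. Given $u\in(\D'(G/H,\tau_{2\rho}^{\vee})\otimes\tau')^{H'}$, define $(T_{u}f)(g'):=\langle u,\pi(g'^{-1})f\rangle\in\tau'$, pairing the $\D'$-factor against the smooth section while retaining the $\tau'$-factor. Cocompactness enters exactly here: if $H$ is cocompact then $G/H$ is compact, so $C^{\infty}(G/H,\tau)=C_{c}^{\infty}(G/H,\tau)$ and ${\mathcal E}'(G/H,\tau_{2\rho}^{\vee})=\D'(G/H,\tau_{2\rho}^{\vee})$; thus \emph{every} $u$ in the target is a continuous functional and the pairing is defined. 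The $H'$-invariance of $u$ then gives $(T_{u}f)(g'h')=\tau'(h')^{-1}(T_{u}f)(g')$, so $T_{u}f$ is a genuine section over $G'/H'$; smoothness of $g'\mapsto\pi(g'^{-1})f$ into the Fr\'echet space together with continuity of $u$ makes $T_{u}f$ smooth; and $\pi(g'^{-1})\pi(g_{0})=\pi((g_{0}^{-1}g')^{-1})$ gives $T_{u}(\pi(g_{0})f)=\ell_{g_{0}}(T_{u}f)$, so $T_{u}\in{\rm Hom}_{G'}$. A direct check that $T\mapsto u_{T}$ and $u\mapsto T_{u}$ are mutually inverse finishes (2).

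The delicate part is the functional-analytic bookkeeping rather than any single hard estimate: correctly identifying the continuous dual of $C^{\infty}(G/H,\tau)$ with the suitably twisted ${\mathcal E}'(G/H,\tau_{2\rho}^{\vee})$, tracking the $2\rho$/density twist consistently through the left/right conventions, and, above all, pinpointing that cocompactness is exactly what promotes ${\mathcal E}'$ to $\D'$ and renders the reconstructed pairing well defined and smooth-valued. Without compactness the image of (1) is only the compactly supported invariants $({\mathcal E}'(G/H,\tau_{2\rho}^{\vee})\otimes\tau')^{H'}$, which is precisely why only injectivity persists in general while bijectivity requires $H$ cocompact.
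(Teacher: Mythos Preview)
The paper does not prove this statement: it is recorded as Fact~\ref{GPGP} with a citation to \cite[Proposition~3.2]{KS} and is used as a black box to derive Corollary~\ref{H}. There is therefore no ``paper's own proof'' to compare against.

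That said, your argument is correct and is exactly the standard one (and is, in outline, what one finds in \cite{KS}). You correctly identify the map as evaluation at the base point $o'=eH'$, you track the $2\rho$-twist as the density bundle so that the continuous dual of $C^{\infty}(G/H,\tau)$ is ${\mathcal E}'(G/H,\tau_{2\rho}^{\vee})$, you recover injectivity from the reconstruction formula $(Tf)(g'o')=u_{T}(\pi(g'^{-1})f)$, and you isolate the precise role of cocompactness as the hypothesis that collapses ${\mathcal E}'$ to $\D'$ so that the inverse $u\mapsto T_{u}$ is defined on the full target. The covariance check $(T_{u}f)(g'h')=\tau'(h')^{-1}(T_{u}f)(g')$ and the $G'$-equivariance of $T_{u}$ are both as you state. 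Nothing is missing.
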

We apply this fact to the setting of Theorem \ref{Main}.
Recall that $G=SL(2n,\R)$ and $Q$ is a maximal parabolic subgroup of $G$
such that $G/Q\simeq \R{\mathbb P}^{2n-1}$.
For $\lambda\in\C$,
we define a one-dimensional representation $\chi_{\lambda}:Q\to GL(1,\C)$ by
$g\mapsto |\det( {\rm Ad}(g):\g/\q\to\g/\q)|^{\frac{-\lambda}{2n}}$.
We denote by $\D'(\R^{2n}\backslash\{0\})_{even,\lambda-2n}$  the space of even homogeneous distributions of degree $\lambda-2n$ on $\R^{2n}\backslash\{0\}$.
\begin{Corollary}
\label{H}
For any closed subgroup $H$ of $G$, we have
$${\rm Hom}_{G}(C^{\infty}(G/Q,\chi_{\lambda}),C^{\infty}(G/H))\simeq\D'(\R^{2n}\backslash\{0\})^{H}_{even,\lambda-2n}.$$
\end{Corollary}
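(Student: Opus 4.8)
The plan is to deduce this from Fact~\ref{GPGP} and then to make the homogeneous line bundle on $G/Q$ explicit.

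First, since $Q$ is a parabolic, hence cocompact, subgroup of $G$, I would apply part~(2) of Fact~\ref{GPGP} with $G'=G$, with the pair $(H,\tau)$ occurring there specialised to $(Q,\chi_{\lambda})$, and with $(H',\tau')=(H,{\bf 1})$. Since $\tau'={\bf 1}$ the tensor factor drops out, and we obtain a natural isomorphism
\[
{\rm Hom}_{G}\bigl(C^{\infty}(G/Q,\chi_{\lambda}),C^{\infty}(G/H)\bigr)\;\simeq\;\D'\bigl(G/Q,(\chi_{\lambda})_{2\rho}^{\vee}\bigr)^{H},
\]
the group $H$ acting on the right-hand side through the restriction to $H$ of the left-translation action of $G$ on $\D'(G/Q,(\chi_{\lambda})_{2\rho}^{\vee})$. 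It then remains to identify this $H$-module with $\D'(\R^{2n}\setminus\{0\})_{even,\lambda-2n}$ endowed with the restriction to $H$ of the standard linear action of $G=SL(2n,\R)$ on $\R^{2n}$.

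Second, I would compute the twisting character $(\chi_{\lambda})_{2\rho}^{\vee}$ of $Q$. By definition $\C_{2\rho}$ is the character $q\mapsto|\det({\rm Ad}(q)\colon\g/\q\to\g/\q)|^{-1}$, which is exactly $\chi_{2n}$ (the value $\lambda=2n$ in the definition of $\chi_{\lambda}$), and $\chi_{\lambda}^{\vee}=\chi_{\lambda}^{-1}=\chi_{-\lambda}$ since $\chi_{\lambda}$ is valued in the positive reals; therefore $(\chi_{\lambda})_{2\rho}^{\vee}=\chi_{-\lambda}\otimes\chi_{2n}=\chi_{2n-\lambda}$. Concretely, realising $Q$ as the stabiliser of the line $\R e_{1}\subset\R^{2n}$ and writing a general element of $Q$ in block-upper-triangular form with upper-left entry $a\in\R^{\times}$ and lower-right block $d$ (so that $a\det d=1$), one finds $\det({\rm Ad}(q)|_{\g/\q})=a^{-2n}$, hence $\chi_{2n-\lambda}(q)=|a|^{2n-\lambda}$.

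Third, I would realise the sections geometrically. The orbit map $g\mapsto g\cdot e_{1}$ identifies $\R^{2n}\setminus\{0\}$ with $G/Q_{1}$, where $Q_{1}\subset Q$ is the stabiliser of the vector $e_{1}$, and it intertwines left translation on $G$ with the standard linear $G$-action; the further quotient by $Q/Q_{1}\cong\R^{\times}$ identifies $\R\mathbb{P}^{2n-1}$ with $G/Q$. Pulling a distribution section of the line bundle attached to $\chi_{2n-\lambda}$ back along $g\mapsto g\cdot e_{1}$ gives precisely the distributions $u$ on $\R^{2n}\setminus\{0\}$ with $u(tx)=|t|^{\lambda-2n}u(x)$ for all $t\in\R^{\times}$: these are homogeneous of degree $\lambda-2n$ (the exponent dictated by $\chi_{2n-\lambda}(q)=|a|^{2n-\lambda}$ together with the normalisation of the induced representation $C^{\infty}(G/Q,\cdot)$) and even (no sign character appears, because $\chi_{2n-\lambda}$ depends on $a$ only through $|a|$). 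This is exactly $\D'(\R^{2n}\setminus\{0\})_{even,\lambda-2n}$, and under the identification the two $G$-actions agree, so passing to $H$-invariants gives the corollary. The argument is entirely formal once Fact~\ref{GPGP} is available; the only point that needs care is the bookkeeping of the degree, namely combining the modular twist $\C_{2\rho}$ (which contributes the shift by $-2n$) with the sign and normalisation conventions implicit in the definition of $C^{\infty}(G/Q,\chi_{\lambda})$, and checking that the even — rather than the odd — homogeneity condition results, which is forced by the fibre of $g\mapsto g\cdot e_{1}$ over $\R\mathbb{P}^{2n-1}$ being $\R^{\times}$ and by $\chi_{2n-\lambda}$ being a power of $|a|$.
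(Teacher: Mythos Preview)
Your proof is correct and follows essentially the same route as the paper's own argument: apply Fact~\ref{GPGP}(2) with $(G',H',\tau')=(G,H,{\bf 1})$, observe that $\C_{2\rho}=\chi_{2n}$ so that $(\chi_{\lambda})_{2\rho}^{\vee}=\chi_{2n-\lambda}$, and then identify $\D'(G/Q,\chi_{\mu})$ with $\D'(\R^{2n}\setminus\{0\})_{even,-\mu}$. The paper states these two ingredients in a single sentence, whereas you supply the explicit matrix computation of $\chi_{2n-\lambda}$ and the geometric realisation via the orbit map $g\mapsto g\cdot e_{1}$; this extra detail is welcome but does not change the strategy.
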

\begin{proof}
This follows from Fact \ref{GPGP} because 
$\C_{2\rho}=\chi_{2n}$ as representations of $Q$ and
$\D'(G/Q,\chi_{\lambda})\simeq \D'(\R^{2n}\backslash\{0\})_{even,-\lambda}$ in the setting of Corollary \ref{H}.
\end{proof}
\section{Notation for distributions on the complex Euclidean space}
\label{Preliminary}
In Section \ref{Construction}, 
we shall consider a linear group action on $\C^{n}$
regarded as a {\it real} vector space.
In order to avoid possible confusion, we
prepare 
some notation for distributions on the complex Euclidean space $\C^{n}$ regarded as a real vector space.
Identifying $\C^{n}$ with $\R^{2n}$ by $z=(z_{1},\dots,z_{n})=(x_{1}+iy_{1},\dots,x_{n}+iy_{n})$, we  write $\D(\C^{n})$ and $\D'(\C^{n})$ for the spaces of 
$C^{\infty}$ functions with compact support and distributions on $\C^{n}\simeq\R^{2n}$, respectively.
We define a distribution $\delta(z_{n},\ov{z}_{n})\in\D'(\C^{n})\simeq\D'(\R^{2n})$ by
\begin{eqnarray*}
\delta(z_{n},\ov{z}_{n})(\phi)&:=&
\frac{1}{(-2i)^{n}}
\int_{\C^{n-1}} \phi(z_{1},\dots,z_{n-1},0)\:dz_{1}d\ov{z}_{1}\dots dz_{n-1}d\ov{z}_{n-1}\\
&=&\frac{1}{-2i}\int_{\R^{2n-2}}\phi(x'+iy',0)\:dx_{1}dy_{1}\dots dx_{n-1}dy_{n-1}
\end{eqnarray*}
for every test function $\phi\in \D(\C^{n})\simeq \D(\R^{2n})$
where $x'+iy':=(x_{1}+iy_{1},\dots,x_{n-1}+iy_{n-1})$.
We write $\delta(\cdot)$ for the usual Dirac delta function on $\R$
and regard it as a distribution on $\R^{2n}$ by the pull-back via
the projection $\R^{2n}\to\R$. Then we have 
\begin{eqnarray}
\delta(z_{n},\ov{z}_{n})=(-2i)^{-1}\delta(x_{n})\delta(y_{n})
\label{zxy}
\end{eqnarray}
as distributions on $\C^{n}\simeq \R^{2n}$.
Since the multiplication by $x_{n}$ or $y_{n}$ kills (\ref{zxy}),
so does it by $z_{n}$ or $\ov{z}_{n}=x_{n}-iy_{n}$, that is, 
\begin{eqnarray}
z_{n}\delta(z_{n},\ov{z}_{n})=\ov{z}_{n}\delta(z_{n},\ov{z}_{n})=0.
\label{zovz}
\end{eqnarray}
We define differential operators on $\C^{n}\simeq \R^{2n}$ by
$$
\frac{\p}{\p z_{j}}:=\frac{1}{2}\left(\frac{\p}{\p x_{j}}-i\frac{\p}{\p y_{j}}\right)
,\quad
\frac{\p}{\p \ov{z}_{j}}:=\frac{1}{2}\left(\frac{\p}{\p x_{j}}+i\frac{\p}{\p y_{j}}\right)
\quad(1\leq j\leq n).
$$
Multiplication of $\frac{\p^{l}}{\p z_{n}^{l}}\delta(z_{n},\ov{z}_{n})$
by distributions of $z_{1},\ov{z}_{1},\dots,z_{n-1},\ov{z}_{n-1}$ makes sense.
We note that a finite family $\{T_{l}\}_{l=1}^{m}$ of distributions on $\C^{n-1}
\backslash\{0\}$ vanish 
if the following equality as distributions on $\C^{n}\backslash\{0\}\simeq \R^{2n}\backslash\{0\}$ holds:
\begin{eqnarray}
\sum_{l=1}^{m}T_{l}(z_{1},\dots,z_{n-1})\frac{\p^{l}}{\p z_{n}^{l}}\delta(z_{n},\ov{z}_{n})=0.
\label{T=0}
\end{eqnarray}
Suppose a group $G$ acts linearly on $\C^{n}$ regarded as a real vector space.
In turn, $G$ acts on the spaces of $C^{\infty}$ functions $f$, distributions $T$, and differential operators $D$ on $\C^{n}\simeq \R^{2n}$.
We shall denote these actions by
\begin{eqnarray*}
(g\cdot f)(z)&:=&f(g^{-1}\cdot z),\\
(g\cdot T)(\phi)&:=&T(g^{-1}\cdot \phi),\\
(g\cdot D)(f)&:=&g\cdot(D(g^{-1}\cdot f)),
\end{eqnarray*}
where $g\in G, z\in\C^{n}$, and $\phi \in \D(\C^{n})\simeq \D(\R^{2n})$.
\section{Proof of Theorem \ref{Main}}
\label{Construction}
In this section, we take $G$ to be $SL(2n,\R)$, and construct an algebraic subgroup $H$ satisfying the two conditions 1) and 2) in Theorem \ref{Main}.
We begin with a $4$-dimensional $\R$-algebra $R_{\ve}$ defined by 
\begin{align}
R_{\ve}&:=\C\oplus \C\ve&\quad& {\rm as\: a\: vector\: space}, \nonumber\\
(a+b\ve)(c+d\ve)&:=(ac+b\ov{d})+(b\ov{c}+ad)\ve&\quad&{\rm as\: a\: ring},
\label{mult}
\end{align}
with $\ve$ being just a symbol, and $a,b,c,d\in\C$.
Regarding $\C$ as an $\R$-vector space, 
we let $R_{\ve}$ act $\R$-linearly on $\C$ by
\begin{eqnarray}
(a+b\ve)\cdot z:=az+b\ov{z}\qquad(a+b\ve\in R_{\ve},\:z\in\C).
\label{action}
\end{eqnarray}
\begin{Remark}
We write $i$ for the imaginary unit of $\C$, then by $(\ref{mult})$ we have
$$\ve^{2}=1,\quad i^{2}=-1,\quad i\ve=-\ve i.$$
Therefore $R_{\ve}$ is isomorphic to the real Clifford algebra $C(1,1)$ as an $\R$-algebra.
Hence we have $R_{\ve}\simeq C(1,1)\simeq M_{2}(\R)$ (for 
example,
\cite[Proposition 4.4.1]{CA}).
\end{Remark}
Let $M_{n}(R_{\ve})$ be the $\R$-algebra of all $n\times n$ matrices over $R_{\ve}$.
The left multiplication defines a (real) representation of $M_{n}(R_{\ve})$ on $\C^{n}$ regarded as
a vector space over $\R$.
This representation induces an injective $\R$-algebra homomorphism 
\begin{eqnarray}
\iota:M_{n}(R_{\ve})\hookrightarrow M_{2n}(\R),
\label{iota}
\end{eqnarray}
which is also surjective 
because the real dimensions of $M_{n}(R_{\ve})$ and $M_{2n}(\R)$ are the same. 
We define a subgroup $H$ of $M_{n}(R_{\ve})$ by
\begin{eqnarray}
H:=\left\{
h^{\theta}(\bm{a}):=\left( 
 \begin{array}{cccccc}
 e^{i\theta}&a_{1}\ve&a_{2}\ve^{2}&\cdots &a_{n-1}\ve^{n-1}\\
 &e^{i\theta}&a_{1}\ve&\ddots &\vdots\\
 &&e^{i\theta}&\ddots &a_{2}\ve^{2}\\
 &&&\ddots&a_{1}\ve\vspace{0.2cm}\\
 &&&&e^{i\theta}
 \end{array}
\right)
\:\middle|\:
\begin{array}{c}
\hspace{-0.2cm}\theta \in \R \\
\hspace{-0.2cm}\bm{a}\in \C^{n-1}\hspace{-0.3cm}
\end{array}
\right\},
\label{Hgroup}
\end{eqnarray}
where $\bm{a}=(a_{1},\dots,a_{n-1})\in \C^{n-1}$.
Then $\iota(H)$ is a subgroup of $GL(2n,\R)$.
\begin{Lemma}
\label{det}
$\det(\iota(H))=\{1\}$.
\end{Lemma}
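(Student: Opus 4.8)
The plan is to compute $\det(\iota(h^{\theta}(\bm a)))$ directly using the explicit matrix form of $h^{\theta}(\bm a)$. First I would observe that the elements of $H$ are upper triangular Toeplitz matrices over $R_{\ve}$ with constant diagonal entry $e^{i\theta}$, and in particular $H$ is an abelian subgroup (the product of two such Toeplitz matrices is again Toeplitz, and since $R_{\ve}$ is commutative on the relevant entries — note $\ve$ commutes with itself and the $a_j\in\C$ multiply as in $R_\ve$ — one checks $h^{\theta}(\bm a)h^{\theta'}(\bm a')$ has the same shape). The key point, however, is not the group law but the single determinant.

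Next I would fix a real basis of $\C^n$ adapted to the calculation, e.g. $(1,i)$ in each coordinate slot, so that $\iota$ sends the scalar $e^{i\theta}\in R_\ve$ acting on $\C$ to the rotation block $\left(\begin{smallmatrix} \cos\theta & -\sin\theta\\ \sin\theta & \cos\theta\end{smallmatrix}\right)\in M_2(\R)$, which has determinant $1$, and sends $\ve^k\in R_\ve$ (acting $z\mapsto \ve^k\cdot z$, which is $z\mapsto z$ if $k$ even and $z\mapsto\ov z$ if $k$ odd) to either the identity or the reflection $\left(\begin{smallmatrix}1&0\\0&-1\end{smallmatrix}\right)$; more generally $a_k\ve^k$ for $a_k\in\C$ goes to some real $2\times2$ matrix. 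Under $\iota$ the block-upper-triangular Toeplitz structure is preserved: $\iota(h^\theta(\bm a))$ is a block upper triangular $2n\times 2n$ matrix with $n$ diagonal blocks each equal to $\iota(e^{i\theta})$, the rotation by $\theta$. Hence $\det(\iota(h^\theta(\bm a))) = \bigl(\det\iota(e^{i\theta})\bigr)^n = 1^n = 1$, independent of $\bm a$ and $\theta$. This shows $\det(\iota(H))\subseteq\{1\}$, and since $1\in\det(\iota(H))$ (take $\theta=0$, $\bm a=0$) we get equality.

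I expect the only mild subtlety to be bookkeeping: making sure that the identification $\iota$ really does carry the upper-triangular form over $R_\ve$ to block-upper-triangular form over $\R$ with the diagonal blocks being exactly the images of the diagonal entries $e^{i\theta}$. This is immediate once one writes $\iota$ block by block using the fixed real basis, because left multiplication by a strictly-upper-triangular matrix over $R_\ve$ is a nilpotent (hence strictly block-upper-triangular) real operator, so it contributes nothing to the determinant. There is no real obstacle here; the statement follows from the observation that $\det$ of a rotation is $1$ together with the triangular structure. An even cleaner phrasing, which I would probably use to avoid choosing a basis, is: the diagonal part of $h^\theta(\bm a)$ is $e^{i\theta}I_n\in M_n(R_\ve)$, whose image under $\iota$ is conjugation-free block-diagonal with determinant $(\det\iota(e^{i\theta}))^n=1$, and the strictly upper triangular part is nilpotent, so $\det\iota(h^\theta(\bm a))=1$ for all $\theta,\bm a$.
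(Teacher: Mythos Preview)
Your main argument is correct and essentially the same as the paper's: both rely on the block-upper-triangular structure of $\iota(h^\theta(\bm a))$ and the fact that the diagonal $2\times 2$ blocks are rotations with determinant $1$. The paper factors $h^\theta(\bm a)$ through the generators $h^\theta(0,\dots,0)$ (block-diagonal rotations) and $h^0(\bm a)$ (unipotent) and uses multiplicativity of $\det$, whereas you compute $\det\iota(h^\theta(\bm a))$ directly for a general element; your route is marginally more economical but the content is identical.

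One side remark: your opening aside that $H$ is abelian is incorrect. The algebra $R_\ve$ is not commutative (indeed $i\ve=-\ve i$, as the paper notes), and a short computation for $n=2$ shows $h^\theta(a)$ and $h^{\theta'}(a')$ need not commute. You correctly flag this as irrelevant to the determinant computation, so it does not affect the validity of your proof, but you should drop that sentence.
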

\begin{proof}
For any $\bm{a}\in\C^{n-1}$,
it is clear that $\det\left(\iota\left(h^{0}(\bm{a})\right)\right)=1$
since $\iota(h^{0}(a))\in GL(2n,\R)$ is a unipotent matrix.
Moreover dividing $\iota\left(h^{\theta}(0,\dots,0)\right)\in GL(2n,\R)$ into $2\times 2$ block matrices,
we have $\det\left(\iota\left(h^{\theta}(0,\dots,0)\right)\right)=1$ 
for any $\theta\in\R$
because $e^{i\theta}$ acts on $\C\simeq \R^{2}$ as rotation.
Since the group $H$ is generated by elements of the form $h^{0}(\bm{a})$ and $h^{\theta}(0,\dots,0)$, the lemma is proved.
\end{proof}
By Lemma \ref{det}, we may identify $H$ in $M_{n}(R_{\ve})$ with $\iota(H)$ in $G=SL(2n,\R)$ via $\iota$.

The following proposition shows that the subgroup $H$ of $G$ satisfies the condition 1) in Theorem \ref{Main}.
\begin{Proposition}
\label{finiteorbit}
For every $j\in\{1,2,\dots,n\}$, 
there exists exactly one $H$-orbit on $G/Q$
of real dimension $2j-1$. 
These orbits exhaust all $H$-orbits on $G/Q$.
In particular, $\#(H\backslash G/Q)=n<\infty$.
\end{Proposition}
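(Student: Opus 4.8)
The plan is to identify $G/Q$ with the space $\R\mathbb{P}^{2n-1}$ of real lines in $\R^{2n}\simeq\C^{n}$, on which $H$ (via the embedding $\iota$) acts through the representation of $M_{n}(R_{\ve})$ on $\C^{n}$, and then to read off the $H$-orbits from the triangular shape of the matrices $h^{\theta}(\bm a)$. For $z=(z_{1},\dots,z_{n})\in\C^{n}\setminus\{0\}$ set $j(z):=\max\{k:z_{k}\neq0\}$. Since every $h^{\theta}(\bm a)$ is upper triangular over $R_{\ve}$ with all diagonal entries equal to the invertible element $e^{i\theta}\in\C^{\times}$, it preserves each coordinate subspace $\C^{k}=\{z:z_{k+1}=\dots=z_{n}=0\}$ and acts on the $k$-th coordinate by $z_{k}\mapsto e^{i\theta}z_{k}$; hence $j(h\cdot z)=j(z)$ for all $h\in H$. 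As $j$ is also invariant under real scaling, it descends to an $H$-invariant function on $\R\mathbb{P}^{2n-1}$ whose fibres
\[
X_{j}:=\{[z]\in\R\mathbb{P}^{2n-1}:z_{j}\neq0,\ z_{j+1}=\dots=z_{n}=0\}\qquad(1\le j\le n)
\]
are $H$-stable, pairwise disjoint, and cover $\R\mathbb{P}^{2n-1}$. Moreover $X_{j}$ is the open subset $\{z_{j}\neq0\}$ of the linearly embedded $\R\mathbb{P}^{2j-1}$ of lines contained in $\C^{j}\simeq\R^{2j}$, so $\dim_{\R}X_{j}=2j-1$.

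It remains to show that each $X_{j}$ is a single $H$-orbit, and I would do this by exhibiting $X_{j}=H\cdot[e_{j}]$, where $e_{j}$ is the $j$-th standard basis vector of $\C^{n}$. Using the action $(\ref{action})$ — in particular that $\ve$ acts on $\C$ by complex conjugation, so $\ve^{m}\cdot1=1$ for every $m$ — a short computation with the matrix $(\ref{Hgroup})$ gives
\[
h^{\theta}(\bm a)\cdot e_{j}=(a_{j-1},a_{j-2},\dots,a_{1},\,e^{i\theta},\,0,\dots,0).
\]
As $(\theta,\bm a)$ runs over $\R\times\C^{n-1}$, the first $j-1$ entries sweep out all of $\C^{j-1}$ while the $j$-th entry sweeps out the unit circle in $\C$; thus $H\cdot e_{j}=\{(w,u,0,\dots,0):w\in\C^{j-1},\,|u|=1\}$. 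Given an arbitrary $[z]\in X_{j}$, rescaling $z$ by the real number $|z_{j}|^{-1}$ brings its $j$-th coordinate onto the unit circle, so $[z]\in H\cdot[e_{j}]$; the reverse inclusion $H\cdot[e_{j}]\subseteq X_{j}$ holds because $X_{j}$ is $H$-stable. Hence $X_{j}=H\cdot[e_{j}]$ is one $H$-orbit of real dimension $2j-1$. Since the $X_{j}$ $(1\le j\le n)$ exhaust $\R\mathbb{P}^{2n-1}$ and their dimensions $1,3,\dots,2n-1$ are pairwise distinct, the proposition follows, with $\#(H\backslash G/Q)=n$.

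I do not expect a genuine obstacle here: once $G/Q$ is correctly identified with the line space and the triangularity of $\iota(h^{\theta}(\bm a))$ is exploited, the argument is elementary. The only place demanding care is the computation of $h^{\theta}(\bm a)\cdot e_{j}$ — checking that the $\ve$-antilinear contributions are invisible on the real number $1$, so that the orbit map $\bm a\mapsto h^{\theta}(\bm a)\cdot e_{j}$ really covers all of $\C^{j-1}$ in the first $j-1$ coordinates — together with the verification that combining this with the phase $e^{i\theta}$ and real scaling recovers all of $X_{j}$ rather than a proper subset.
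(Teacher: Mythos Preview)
Your proof is correct and follows essentially the same route as the paper: identify $G/Q$ with $(\C^{n}\setminus\{0\})/\R^{\times}\simeq\R\mathbb{P}^{2n-1}$, stratify by the index of the last nonzero coordinate (your $X_{j}$ is the paper's $Y_{2j-1}$), and observe that each stratum is a single $H$-orbit. The paper merely asserts that $H$ acts transitively on $Y_{2j-1}$, whereas you actually carry out the verification via the computation of $h^{\theta}(\bm a)\cdot e_{j}$; so your argument supplies the detail the paper omits.
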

\begin{proof}
Let $\R^{\times}:=GL(1,\R)$ act on $\C^{n}$ by scalar multiplication and put $X:=(\C^{n}\backslash\{0\})/\R^{\times}$.
Identifying $\C^{n}$ with $\R^{2n}$, we have $X\simeq\R\mathbb{P}^{2n-1}\simeq G/Q$ 
and these isomorphisms induce a bijection:
\begin{eqnarray}
H\backslash X\simeq H\backslash G/Q.
\label{XGQ}
\end{eqnarray}
For $j\in\{1,2,\dots,n\}$, we define a real $(2j-1)$-dimensional submanifold of $X$ by
\begin{eqnarray}
Y_{2j-1}:=\{(z_{1},\dots,z_{j},0,\dots,0)\in \C^{n}\mid z_{j}\neq0\}/\R^{\times}\quad\subset\quad X.
\label{Y}
\end{eqnarray}
Then the group $H$ leaves $Y_{2j-1}$ invariant, and in fact it acts transitively.
Thus we have an orbit decomposition
$$
H\backslash X=
\bigcup_{j=1}^{n} Y_{2j-1}.
$$
Therefore $\#(H\backslash G/Q)=\#(H\backslash X)=n<\infty$.
\end{proof}
Let us prove that the subgroup $H$ of $G$ satisfies the condition 2') of Theorem \ref{Main} in the case of $n\geq 3$.
We define two real analytic vector fields $D$ and $\ov{D}$ on 
$\C^{n}\simeq\R^{2n}$ for $n\geq 3$ by
\begin{eqnarray}
D:=\ov{z}_{n-2}\frac{\p}{\p \ov{z}_{n-1}}+z_{n-1}\frac{\p}{\p z_{n}},\quad
\ov{D}:=z_{n-2}\frac{\p}{\p z_{n-1}}+\ov{z}_{n-1}\frac{\p}{\p \ov{z}_{n}}.
\label{DD}
\end{eqnarray}
For $l\in\N$, we define nonzero two distributions
$T_{\lambda}^{l},\ov{T}_{\lambda}^{l}\in\D'(\C^{n}\backslash\{0\})$
with holomorphic parameter $\lambda\in\C$
by
\begin{eqnarray}
T^{l}_{\lambda}(z):=\frac{1}{\Gamma\left(2-\frac{\lambda}{2}\right)}
D^{l}
\left(
|z_{n-1}|^{2-\lambda}
\delta(z_{n},\ov{z}_{n})
\right),
\label{TT1}
\end{eqnarray}
\begin{eqnarray}
\ov{T}^{l}_{\lambda}(z):=\frac{1}{\Gamma\left(2-\frac{\lambda}{2}\right)}
\ov{D}^{l}
\left(
|z_{n-1}|^{2-\lambda}
\delta(z_{n},\ov{z}_{n})
\right),
\label{TT2}
\end{eqnarray}
where $\Gamma(\cdot)$ denotes the gamma function.
We note that $|z_{n-1}|^{2-\lambda}=(x_{n-1}^{2}+y_{n-1}^{2})^{1-\frac{\lambda}{2}}$
has a simple pole at $\lambda\in2\N+4$ as a distribution and $\Gamma(2-\frac{\lambda}{2})$ has a simple pole at $\lambda\in2\N+4$.
Therefore $\Tll$ and $\ov{T}_{\lambda}^{l}$ define distributions with holomorphic parameter 
$\lambda \in \C$ (for example, see \cite[Appendix B1.4]{gelfand}).
Moreover $T_{\lambda}^{l}$ and $\ov{T}^{l}_{\lambda}$ are homogeneous distributions of degree $-\lambda$ because $|z_{n-1}|^{2-\lambda}$ and $\delta(z_{n},\ov{z}_{n})$
are homogeneous of degree $2-\lambda$ and $-2$, respectively,
and the operators $D$ and $\ov{D}$ preserve the degrees.
Clearly, $T^{l}_{\lambda}$ and $\ov{T}^{l}_{\lambda}$ are even distributions,
therefore $T^{l}_{\lambda},\ov{T}^{l}_{\lambda}\in\D'(\C^{n}\backslash\{0\})_{even,-\lambda}
\simeq \D'(G/Q,\chi_{\lambda})$.
\begin{Proposition}
\label{construction}
Suppose $n\geq 3$. 
Then for any $\lambda\in\C$ and any $l\in\N$, 
the distributions $T_{\lambda}^{l}$ and $\ov{T}_{\lambda}^{l}$ are $H$-invariant, that is, $T_{\lambda}^{l},\ov{T}_{\lambda}^{l}\in\D'(\C^{n}\backslash\{0\})^{H}_{even,-\lambda}$.\end{Proposition}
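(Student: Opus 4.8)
The plan is to reduce the asserted $H$-invariance to an infinitesimal statement and then to an algebraic identity among explicit differential operators. Since $H=\{h^{\theta}(\bm a)\}$ is connected and the orbit map $g\mapsto g\cdot T^{l}_{\lambda}$ into $\D'(\C^{n}\backslash\{0\})$ is smooth, it suffices to show that the Lie algebra $\h$ of $H$ annihilates $T^{l}_{\lambda}$. Differentiating the coordinate one-parameter families $t\mapsto h^{t}(\bm 0)$ and $\bm a\mapsto h^{0}(\bm a)$ at the identity realizes $\h$, inside the real linear vector fields on $\C^{n}\simeq\R^{2n}$, as the span of the rotation field $Z$ generating $z\mapsto e^{i\theta}z$ together with, for each $1\leq k\leq n-1$, two ``shift-by-$k$'' fields $\xi_{k},\eta_{k}$ whose only nonzero coefficients are of the form ($z_{a+k}$ or $\ov{z}_{a+k}$) times $\p_{z_{a}}$ or $\p_{\ov{z}_{a}}$, the conjugation appearing precisely when $k$ is odd (because $\ve\cdot z=\ov{z}$). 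Each of these fields is divergence free, so its action on distributions agrees, up to a sign, with its action as a first-order differential operator; and the $H$-invariance of every $\ov{T}^{l}_{\lambda}$ will follow from that of every $T^{l}_{\lambda}$ by complex conjugation, which intertwines the $H$-action (as $H$ acts $\R$-linearly) and sends $T^{l}_{\bar\lambda}$ to a constant multiple of $\ov{T}^{l}_{\lambda}$.

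I would then work inside the linear subspace $W\subseteq\D'(\C^{n}\backslash\{0\})$ spanned by the distributions $\ov{z}_{n-2}^{\,p}\,g(z_{n-1},\ov{z}_{n-1})\,\p_{z_{n}}^{m}\delta(z_{n},\ov{z}_{n})$ with $p,m\geq 0$ and $g$ a distribution in $z_{n-1},\ov{z}_{n-1}$. Writing $T^{0}_{\lambda}:=\Gamma(2-\tfrac{\lambda}{2})^{-1}|z_{n-1}|^{2-\lambda}\delta(z_{n},\ov{z}_{n})$, one checks at once that $T^{0}_{\lambda}\in W$ and that $D$ maps $W$ into $W$ — its first summand multiplies by $\ov{z}_{n-2}$ and differentiates in $\ov{z}_{n-1}$, its second multiplies by $z_{n-1}$ and raises the order of $\p_{z_{n}}$ — so by $(\ref{TT1})$ we have $T^{l}_{\lambda}=D^{l}(T^{0}_{\lambda})\in W$ for every $l$. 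The two properties of $W$ that do the work are: \textbf{(a)} $W$ is annihilated by $\p_{z_{j}}$ and $\p_{\ov{z}_{j}}$ for $j\leq n-3$ and by $\p_{z_{n-2}}$ (elements of $W$ depend on $z_{n-2}$ only through $\ov{z}_{n-2}$); and \textbf{(b)} multiplication by $\ov{z}_{n}$ annihilates $W$, since $\ov{z}_{n}$ commutes with $\p_{z_{n}}$ and $\ov{z}_{n}\,\delta(z_{n},\ov{z}_{n})=0$ by $(\ref{zovz})$.

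The Proposition now follows from two claims: \textbf{(c)} every generator $Z,\xi_{k},\eta_{k}$ kills $T^{0}_{\lambda}$; and \textbf{(d)} for every generator $X$ the commutator $[X,D]$ annihilates $W$. Granting these, for any such $X$ one has
\[
X\bigl(T^{l}_{\lambda}\bigr)=X\bigl(D^{l}T^{0}_{\lambda}\bigr)=[X,D^{l}]\,T^{0}_{\lambda}+D^{l}\bigl(X\,T^{0}_{\lambda}\bigr);
\]
the second term vanishes by (c), while $[X,D^{l}]=\sum_{j=0}^{l-1}D^{j}[X,D]\,D^{l-1-j}$ sends $T^{0}_{\lambda}$ to $\sum_{j}D^{j}\bigl([X,D]\,D^{l-1-j}T^{0}_{\lambda}\bigr)=0$, because each $D^{l-1-j}T^{0}_{\lambda}\in W$ and $[X,D]$ kills $W$ by (d); hence $\h\cdot T^{l}_{\lambda}=0$. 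Claim (c) is easy: for $\xi_{k},\eta_{k}$ with $k\geq 2$ it is immediate, as those fields differentiate only in $z_{a},\ov{z}_{a}$ with $a\leq n-2$ while $T^{0}_{\lambda}$ involves only $z_{n-1}$ and $z_{n}$; for $\xi_{1},\eta_{1}$ the only surviving terms are scalar multiples of $\ov{z}_{n}\,\p_{z_{n-1}}T^{0}_{\lambda}$ and $z_{n}\,\p_{\ov{z}_{n-1}}T^{0}_{\lambda}$, both zero by $(\ref{zovz})$; and for $Z$ it follows from the radial symmetry of $|z_{n-1}|^{2-\lambda}$ and the rotation-invariance of $\delta(z_{n},\ov{z}_{n})$ (equivalently, both factors of $T^{0}_{\lambda}$ have $Z$-weight $0$).

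The main obstacle is claim (d): one must compute $[X,D]=X(D)-D(X)$ for each of the $2n-1$ generators and verify, term by term, that it annihilates $W$. For $Z$ this is immediate, since both summands of $D$ have $Z$-weight $0$, so $[Z,D]=0$. For $\xi_{k},\eta_{k}$ one uses the very particular shape $D=\ov{z}_{n-2}\p_{\ov{z}_{n-1}}+z_{n-1}\p_{z_{n}}$: for $k=1$ the two $\p_{\ov{z}_{n-1}}$-terms — which would otherwise be dangerous — cancel, leaving $[\xi_{1},D]=\ov{z}_{n}\p_{z_{n}}-\ov{z}_{n-2}\p_{z_{n-2}}$ and an $i$-multiple of it for $\eta_{1}$, whose first term is killed by (b) and whose second by (a); for $k\geq 2$ the only terms produced have $\ov{z}_{n}$ as a factor of their coefficient — killed by (b) — or carry one of the derivatives $\p_{z_{n-2}},\p_{z_{j}},\p_{\ov{z}_{j}}$ with $j\leq n-3$ — killed by (a). The bookkeeping is routine but must be done for every $k$; conceptually, the asymmetry between the two summands of $D$ (one tangential to $\{z_{n}=0\}$, one transverse) is exactly what forces each commutator into the annihilator of $W$, and the normalization $\Gamma(2-\tfrac{\lambda}{2})^{-1}$ plays no role here beyond making $T^{l}_{\lambda}$ a bona fide distribution for all $\lambda$.
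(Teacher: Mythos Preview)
Your argument is correct. The reduction to the Lie algebra is legitimate since $H$ is connected, the generators are divergence-free so the infinitesimal action on distributions is (up to sign) just the differential operator, and your subspace $W$ together with properties (a) and (b) does exactly the bookkeeping needed. I checked the commutators: for $k=1$ one gets $[\xi_{1},D]=\ov z_{n}\p_{z_{n}}-\ov z_{n-2}\p_{z_{n-2}}$ with the two $z_{n-1}\p_{\ov z_{n-1}}$ terms cancelling as you say; for $k=2$ one gets $\ov z_{n}\p_{\ov z_{n-1}}-z_{n-1}\p_{z_{n-2}}-\ov z_{n-2}\p_{\ov z_{n-3}}$ (the last term absent when $n=3$); and for $k\geq 3$ only derivatives $\p_{z_{j}},\p_{\ov z_{j}}$ with $j\leq n-3$ survive. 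All of these annihilate $W$. The passage from $T^{l}_{\lambda}$ to $\ov T^{l}_{\lambda}$ by complex conjugation is also fine, since conjugation on $\C^{n}$ sends $h^{\theta}(\bm a)$ to $h^{-\theta}(\ov{\bm a})\in H$.

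The paper takes a different route: it stays at the \emph{group} level, proving (Lemma~\ref{D}) the finite transformation law
\[
h_{1}(a)\cdot D \;=\; D + a\bigl(\ov z_{n-2}-\ov a\,z_{n-1}+|a|^{2}\ov z_{n}\bigr)\p_{z_{n-2}} - a\,\ov z_{n}\,\p_{z_{n}},
\]
and then argues directly that $(h_{1}(a)\cdot D)^{l}$ applied to the (transformed) base distribution equals $D^{l}$ applied to it, because the extra pieces involve either $\p_{z_{n-2}}$ or multiplication by $\ov z_{n}$, which commute past $D$ and kill $|z_{n-1}|^{2-\lambda}\delta(z_{n},\ov z_{n})$. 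Your commutator identities are precisely the infinitesimal shadow of this lemma. The paper's approach is shorter to write down once Lemma~\ref{D} is stated, at the cost of leaving implicit why the ``bad'' terms can be pushed through $D^{l}$; your formulation via the $D$-stable subspace $W$ makes that step transparent and packages the verification for all generators uniformly.
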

\begin{proof}
We prove only the claim for $T^{l}_{\lambda}$ 
as that for $\ov{T}^{l}_{\lambda}$ can be shown similarly.
We define elements of $H$ by the equality
\begin{eqnarray}
h(\theta):=h^{\theta}(0,\dots,0),\quad h_{j}(a):=h^{0}(0,\dots,0,\stackrel{j}{\stackrel{\vee}{a}},0,\dots,0),
\label{hj}
\end{eqnarray}
where $\theta\in\R$, $a\in\C$ and, $j\in\{1,2,\dots,n-1\}$ (see $(\ref{Hgroup})$ for notation). 
Then it is sufficient to prove that
$h(\theta)\cdot T^{l}_{\lambda}=T^{l}_{\lambda}$ 
for any $\theta\in\R$
and
$h_{j}(a)\cdot T^{l}_{\lambda}=T^{l}_{\lambda}$
for any $a\in\C$ and $j\in\{1,2,\dots,n-1\}$
because  the group $H$ is generated by elements of the form $h(\theta)$ and $h_{j}(a)$.
The first claim
follows easily from $h(\theta)\cdot z=e^{i\theta}z$ for $z\in\C^{n}$.
For the case of $j=1$ of the second claim,
we need the following:
%second claim is a consequence of direct calculation for every $j\in\{1,2,\dots,n-1\}$.
%Hence the remaining case is to prove $h_{j}(a)\cdot T^{l}_{\lambda}=T^{l}_{\lambda}$ in the case $j=1$. We need:
\begin{Lemma}
\label{D}
Let $D$ be the vector field defined in (\ref{DD}). Then, we have
\begin{eqnarray*}
h_{1}(a)\cdot D&=&D+a\left(\ov{z}_{n-2}-\ov{a}z_{n-1}+|a|^{2}\ov{z}_{n}\right)\frac{\p}{\p z_{n-2}}
-a\ov{z}_{n}\frac{\p}{\p z_{n}}\quad (a\in\C).
\end{eqnarray*}
\end{Lemma}
This is an easy calculation, hence we omit the proof.

By Lemma \ref{D},
the following equality as distributions on $\C^{n}\backslash\{0\}\simeq\R^{2n}\backslash\{0\}$ holds:
\begin{eqnarray*}
(h_{1}(a)\cdot\Tll)(z)&=&\frac{1}{\Gamma\left(2-\frac{\lambda}{2}\right)}\left(
h_{1}(a)\cdot D
\right)^{l}
\left(
|z_{n-1}-a\ov{z}_{n}|^{2-\lambda}
\delta(z_{n},\ov{z}_{n})
\right)\\
&=&\frac{1}{\Gamma\left(2-\frac{\lambda}{2}\right)}
\left(
D-a\ov{z}_{n}\frac{\p}{\p z_{n}}
\right)^{l}
\left(
|z_{n-1}|^{2-\lambda}\delta(z_{n},\ov{z}_{n})
\right)\\
&=&\frac{1}{\Gamma\left(2-\frac{\lambda}{2}\right)}
D^{l}
\left(|z_{n-1}|^{2-\lambda}\delta(z_{n},\ov{z}_{n})
\right)\\
&=&\Tll(z).
\end{eqnarray*}
We have used (\ref{zovz}) and $\frac{\p}{\p z_{n-2}}\left(|z_{n-1}|^{2-\lambda}\delta(z_{n},\ov{z}_{n})\right)=0$ in the second equality.
For $j\in\{2,3,\dots,n-1\}$, 
$h_{j}(a)\cdot T^{l}_{\lambda}=T^{l}_{\lambda}$ can be shown similarly
in the case $j=1$.
Therefore $T_{\lambda}^{l}$ is $H$-invariant.
Thus the proof of proposition completes.
\end{proof}
\begin{Proposition}
\label{Dinfty}
If $n\geq 3$, for any $\lambda\in\C$ we have
$$\dim \D'(\C^{n}\backslash\{0\})^{H}_{even,-\lambda}=\infty.$$
\end{Proposition}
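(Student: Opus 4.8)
The plan is to produce an explicit infinite linearly independent subset of $\D'(\C^{n}\backslash\{0\})^{H}_{even,-\lambda}$. By Proposition \ref{construction} all the distributions $\Tll$ ($l\in\N$) lie in this space (as do the $\ov{T}^{l}_{\lambda}$, which we will not need), so it suffices to show that for each fixed $\lambda\in\C$ infinitely many of the $\Tll$ are linearly independent.

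First I would put $\Tll$ into a normal form along the variable $z_{n}$. The two summands $\ov{z}_{n-2}\frac{\p}{\p\ov{z}_{n-1}}$ and $z_{n-1}\frac{\p}{\p z_{n}}$ of $D$ commute. Write $\Phi_{s}:=|z_{n-1}|^{2-s}/\Gamma(2-s/2)$ $(s\in\C)$ for the normalized Riesz distribution in the single complex variable $z_{n-1}$: it is holomorphic in $s$ and never vanishes identically — away from the poles $s\in 2\N+4$ it is a nowhere vanishing real analytic function of $z_{n-1}\neq0$, while at $s=4+2k$ $(k\in\N)$ it equals a nonzero constant multiple of $\frac{\p^{k}}{\p z_{n-1}^{k}}\frac{\p^{k}}{\p\ov{z}_{n-1}^{k}}\delta(z_{n-1},\ov{z}_{n-1})$ — and one checks (by analytic continuation from $\Ree s\ll0$) that $\frac{\p^{k}}{\p\ov{z}_{n-1}^{k}}\Phi_{s}=z_{n-1}^{k}\Phi_{s+2k}$. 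Expanding $D^{l}$ by the binomial theorem then gives
\[
\Tll=\sum_{m=0}^{l}S^{l}_{m}\,\frac{\p^{m}}{\p z_{n}^{m}}\delta(z_{n},\ov{z}_{n}),\qquad S^{l}_{m}:=\binom{l}{m}z_{n-1}^{l}\,\ov{z}_{n-2}^{\,l-m}\,\Phi_{\lambda+2(l-m)}\ \in\ \D'(\C^{n-1}\backslash\{0\}).
\]
By $(\ref{T=0})$, a linear relation $\sum_{l}a_{l}\Tll=0$ with finitely many nonzero $a_{l}$ forces $\sum_{l}a_{l}S^{l}_{m}=0$ in $\D'(\C^{n-1}\backslash\{0\})$ for every $m$.

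Next I would extract linear independence from a suitable leading coefficient. If $\lambda\notin 2\N+4$, the coefficient of $\frac{\p^{l}}{\p z_{n}^{l}}\delta(z_{n},\ov{z}_{n})$ in $\Tll$ is $S^{l}_{l}=z_{n-1}^{l}\Phi_{\lambda}\neq0$; picking in a hypothetical nontrivial relation the largest $L$ with $a_{L}\neq0$ and reading off level $m=L$ gives $a_{L}z_{n-1}^{L}\Phi_{\lambda}=0$, a contradiction, so $\{\Tll\}_{l\in\N}$ is linearly independent. If $\lambda\in 2\N+4$, put $j:=\tfrac{\lambda-4}{2}\in\N$; now $z_{n-1}^{l}\Phi_{\lambda}$ vanishes once $l>j$, so instead I would read off level $m=j$: for $l\geq j$, substituting $\Phi_{\lambda+2(l-j)}=\Phi_{4+2l}=(\text{nonzero const})\,\frac{\p^{l}}{\p z_{n-1}^{l}}\frac{\p^{l}}{\p\ov{z}_{n-1}^{l}}\delta(z_{n-1},\ov{z}_{n-1})$ and using the easy identity $z_{n-1}^{l}\frac{\p^{l}}{\p z_{n-1}^{l}}\delta(z_{n-1},\ov{z}_{n-1})=(-1)^{l}l!\,\delta(z_{n-1},\ov{z}_{n-1})$ (a generalization of $(\ref{zovz})$), one finds that $S^{l}_{j}$ equals, up to a nonzero scalar, $\ov{z}_{n-2}^{\,l-j}\,\frac{\p^{l}}{\p\ov{z}_{n-1}^{l}}\delta(z_{n-1},\ov{z}_{n-1})$. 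These are distributions of pairwise distinct transversal order along $\{z_{n-1}=0\}$ with coefficient functions $\ov{z}_{n-2}^{\,l-j}$ that do not vanish identically, so the analogue of $(\ref{T=0})$ in the $\ov{z}_{n-1}$ direction shows that the level-$j$ equations force every $a_{l}=0$; hence $\{\Tll\}_{l\geq j}$ is linearly independent. (The hypothesis $n\geq3$ is used here both to make the vector field $D$ meaningful and to ensure that $z_{n-2}$ is a genuine coordinate.) In either case $\D'(\C^{n}\backslash\{0\})^{H}_{even,-\lambda}$ contains an infinite linearly independent family, so it is infinite dimensional.

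The normal-form identity is routine once the commutativity is noticed; the one genuinely delicate point is the behaviour at the exceptional parameters $\lambda\in 2\N+4$, where the Riesz distribution collapses onto the origin of the $z_{n-1}$-plane, so that the naive top coefficient vanishes and one must drop down to the $j$-th $z_{n}$-derivative to isolate a non-vanishing one — this is the step I expect to require the most care.
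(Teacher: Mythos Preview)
Your proof is correct and follows essentially the same strategy as the paper: expand $D^{l}$ by the binomial theorem to write $T^{l}_{\lambda}$ as $\sum_{m}S^{l}_{m}\,\frac{\p^{m}}{\p z_{n}^{m}}\delta(z_{n},\ov{z}_{n})$ with coefficients in $\D'(\C^{n-1}\backslash\{0\})$, and then appeal to (\ref{T=0}). The paper's proof is terser and does not single out the exceptional parameters $\lambda\in 2\N+4$; your explicit drop from the top level $m=L$ to level $m=j$ in that case (using the collapse of $\Phi_{\lambda}$ to a derivative of $\delta$ and the analogue of (\ref{T=0}) in the $\ov{z}_{n-1}$ direction) makes precise a detail the paper leaves to the reader.
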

\begin{proof}
We know from Proposition \ref{construction} that $T^{l}_{\lambda}\in\D'(\C^{n}\backslash\{0\})^{H}_{even,-\lambda}$ for all $l\in\N$.
Therefore it is sufficient to prove that $\{\Tll\}_{l\in\N}$ is linearly independent. 
But this is a consequence of (\ref{T=0}) and the following equality as distributions on 
$\C^{n}\backslash\{0\}\simeq \R^{2n}\backslash\{0\}$:
\begin{eqnarray*}
T^{l}_{\lambda}(z)\hspace{-0.3cm}&=&\hspace{-0.3cm}
\frac{1}{\Gamma\left(2-\frac{\lambda}{2}\right)}
\left(\ov{z}_{n-2}\frac{\p}{\p \ov{z}_{n-1}}+z_{n-1}\frac{\p}{\p z_{n}}\right)^{l}
\left(
|z_{n-1}|^{2-\lambda}
\delta(z_{n},\ov{z}_{n})
\right)\\
&=&\hspace{-0.3cm}
\frac{1}{\Gamma\left(2-\frac{\lambda}{2}\right)}
\sum_{k=0}^{l}
\binom{l}{k}
\left( \ov{z}_{n-2}\frac{\p}{\p \ov{z}_{n-1}}\right)^{k}
\left(z_{n-1}\frac{\p}{\p z_{n}}\right)^{l-k}
\hspace{-0.1cm}\left(
|z_{n-1}|^{2-\lambda}
\delta(z_{n},\ov{z}_{n})
\right)\\
&=&\hspace{-0.3cm}
\frac{1}{\Gamma\left(2-\frac{\lambda}{2}\right)}
\sum_{k=0}^{l}
\binom{l}{k}
\left(
 \ov{z}_{n-2}^{k}z_{n-1}^{l-k}
 \frac{\p^{k} |z_{n-1}|^{2-\lambda}}{\p \ov{z}_{n-1}^{k}}
 \right)
\frac{\p^{l-k}}{\p z_{n}^{l-k}}
\delta(z_{n},\ov{z}_{n}).
\end{eqnarray*}
We have used the binomial expansion in the second equality.
\end{proof}
\begin{proof}[Proof of Theorem {\rm \ref{Main}} in the case $n\geq 3$]
We take $H$ to be the subgroup (\ref{Hgroup}) via the inclusion $\iota$ (\ref{iota}).
Then $H$ satisfies 1) by Proposition \ref{finiteorbit}. 
Moreover $H$ satisfies 2') by Corollary \ref{H} and Proposition \ref{Dinfty}
because $\D'(\R^{2n}\backslash\{0\})^{H}_{even,-\lambda}\simeq\D'(\C^{n}\backslash\{0\})^{H}_{even,-\lambda}$.
We note that any one-dimensional representation $\chi$ of $Q$
is of the form $\chi_{\lambda}$ for some
$\lambda\in\C$.
\end{proof}

Next we discuss in the case $n=2$.
For $\lambda=2$ in (\ref{TT1}) and (\ref{TT2}), the binomial expansion shows
\begin{eqnarray}
T^{l}_{2}(z)&=&
\left(
\ov{z}_{n-2}\frac{\p}{\p \ov{z}_{n-1}}+z_{n-1}\frac{\p}{\p z_{n}}
\right)^{l}
\delta(z_{n},\ov{z}_{n})\nonumber\\
&=&
\left(
z_{n-1}\frac{\p}{\p z_{n}}
\right)^{l}
\delta(z_{n},\ov{z}_{n}),
\label{Tl2}\\
\ov{T}^{l}_{2}(z)
&=&
\left(
\ov{z}_{n-1}\frac{\p}{\p \ov{z}_{n}}
\right)^{l}
\delta(z_{n},\ov{z}_{n}).
\label{Tl22}
\end{eqnarray}
In the second equality, we have used $\frac{\p}{\p \ov{z}_{n-1}}\delta(z_{n},\ov{z}_{n})=0$ because 
$\delta(z_{n},\ov{z}_{n})$ does not depend on the variable $\ov{z}_{n-1}$.
Then we define $T^{l}_{2}$ and $\ov{T}^{l}_{2}$ in the case of $(n,\lambda)=(2,2)$ by (\ref{Tl2}) and (\ref{Tl22}), respectively, in which the variables $z_{n-2},\ov{z}_{n-2}$ do not appear.
By using these distributions,
we prove the case $n=2$ of Theorem \ref{Main}.

\begin{comment}
%%%%%%%%%%%%%%%%%%%%%%%%%%%%%%%%%%%%%
\begin{Proposition}
\label{n=2}
Suppose $n=2$.
Let $G, Q$ be as in Theorem \ref{Main},
and $H$ be the subgroup (\ref{Hgroup}) of $G=SL(4,\R)$ via the inclusion $\iota$ (\ref{iota}).
Then $H$ satisfies the following two conditions.
\begin{enumerate}[1)]
\item
$\#(H\backslash G/Q)<\infty.$
\item
$\dim {\rm Hom}_{G}(C^{\infty}(G/Q,\chi_{2}),C^{\infty}(G/H))=\infty.
$
\end{enumerate}
\label{n2}
\end{Proposition}
%%%%%%%%%%%%%%%%%%%%%%%%%%%%%%%%%%
\end{comment}
\begin{proof}[Proof of Theorem \ref{Main} in the case of $n=2$]
We take $H$ to be the subgroup (\ref{Hgroup}) via the inclusion $\iota$ (\ref{iota}) as in the case of $n\geq 3$, then 
$H$ satisfies 1) by Proposition \ref{finiteorbit}.
Set $D':=z_{1}\frac{\p}{\p z_{2}}$. By (\ref{Tl2}) we have 
\begin{eqnarray*}
T^{l}_{2}(z)=\left(D'\right)^{l}
\delta(z_{2},\ov{z}_{2}).
\end{eqnarray*}
We note that the group $H$ is generated by elements of the form $h(\theta)$ and $h_{1}(a)$ in the case of $n=2$.
Just like before,
$h(\theta)\cdot T_{\lambda}^{l} =T^{l}_{\lambda}$ follows from 
$h(\theta)\cdot z=e^{i\theta}z$ for $z\in\C^{2}$.
Moreover, direct computation shows
$$
h_{1}(a)\cdot D'=D'+\ov{a}(z_{1}-a\ov{z}_{2})\frac{\p}{\p \ov{z}_{1}}-a\ov{z}_{2}\frac{\p}{\p z_{2}}
\quad (a\in\C).
$$
Hence in the same way as in $n\geq 3$, the following equality of distributions on $\C^{2}\backslash\{0\}\simeq\R^{4}\backslash\{0\}$ holds:
\begin{eqnarray*}
h_{1}(a)\cdot T^{l}_{2}(z)&=&\left(D'+\ov{a}(z_{1}-a\ov{z}_{2})\frac{\p}{\p \ov{z}_{1}}-a\ov{z}_{2}\frac{\p}{\p z_{2}}\right)^{l}\delta(z_{2},\ov{z}_{2})\\
&=&(D')^{l}\delta(z_{n},\ov{z}_{n})\\
&=&T^{l}_{2}(z).
\end{eqnarray*}
Therefore we have $T^{l}_{2}\in\D'(\C^{2}\backslash\{0\})^{H}_{even,-2}$ for any $l\in\N$.
Furthermore, we have $\dim \D'(\C^{2}\backslash\{0\})^{H}_{even,-2}=\infty$ because $\{T^{l}_{2}\}_{l\in\N}$ are linearly independent.
Thus 
$H$ satisfies 2)
by Corollary \ref{H}.
Therefore the proof of the case of $n=2$ completes.
\end{proof}
\begin{Remark}
For $n=2$, the dimension of 
$\D'(\C^{2}\backslash\{0\})^{H}_{even,-\lambda}$ is finite-dimensional
for generic $\lambda\in\C$. 
Indeed one can show that
$$
\dim \D'(\C^{2}\backslash\{0\})^{H}_{even,-\lambda}\leq 2\quad
\text{ for $\lambda\in\C\backslash\{2\}$}.
$$
\end{Remark}
Finally, we discuss the supports of elements of 
$\D'(G/Q,\chi_{\lambda})^{H}$.
If $\lambda\notin 2\N+4$, we have
${\rm supp}(T^{l}_{\lambda})=cl(Y_{2n-3})$ by (\ref{TT1}).
 Here $cl(Y_{2n-3})$ denotes the closure of $Y_{2n-3}$ in $X$
(See (\ref{Y}) for the definition of $Y_{2j-1}\subset X$ for $j\in\{1,2,\dots,n\}$ and hereafter we regard as $Y_{2j-1}\subset G/Q$ by $X\simeq G/Q$ in (\ref{XGQ})).
We put $X_{j}:={\it cl}(Y_{2j-1})\subset X$. 
Then we have 
$$\dim \left(\D'_{X_{n-1}}(G/Q,\chi_{\lambda})^{H}\middle/
\D'_{X_{n-2}}(G/Q,\chi_{\lambda})^{H}
\right)=\infty,$$
where $\D'_{X_{j-1}}(G/Q,\chi_{\lambda})^{H}:=\{F\in\D'(G/Q,\chi_{\lambda})^{H}\mid {\rm supp}(F)\subset X_{j-1}\}$.
Furthermore, the following statement holds more generally:
\begin{Proposition}
\label{suppinfty}
Suppose $n\geq 3$.
Let $G$ and $Q$ be as in Theorem \ref{Main}. 
Then for any $j\in\{2,3,\dots,n-1\}$, we have
\begin{eqnarray*}
\dim \left(\D'_{X_{j}}(G/Q,\chi_{\lambda})^{H}\middle/
\D'_{X_{j-1}}(G/Q,\chi_{\lambda})^{H}
\right)=\infty
\end{eqnarray*}
for any $\lambda\in\C\backslash(2\N+2+2n-2j)$.
\end{Proposition}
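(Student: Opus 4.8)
The plan is to extend the distributions $T_\lambda^l$ — which realize the case $j=n-1$ of the assertion — to an arbitrary $j\in\{2,\dots,n-1\}$. Put $\mu_j:=\lambda+2j-2n+2$ and
$$
D_j:=\ov{z}_{j-1}\frac{\p}{\p\ov{z}_j}+z_j\frac{\p}{\p z_{j+1}}
$$
(both $z_{j-1}$ and $z_{j+1}$ exist since $2\le j\le n-1$), and set, for $l\in\N$,
$$
S_{j,\lambda}^l:=D_j^{\,l}\!\left(|z_j|^{2-\mu_j}\prod_{m=j+1}^{n}\delta(z_m,\ov{z}_m)\right).
$$
Since $|z_j|^{2-\mu_j}$ and $\delta(z_m,\ov{z}_m)$ are homogeneous of degrees $2-\mu_j$ and $-2$ and $D_j$ preserves the degree, $S_{j,\lambda}^l$ is homogeneous of degree $2-\mu_j-2(n-j)=-\lambda$; it is clearly even; and $|z_j|^{2-\mu_j}$ (as will all of its $\ov{z}_j$-derivatives) has a pole as a distribution exactly when $\mu_j\in2\N+4$, that is, when $\lambda\in2\N+2+2n-2j$ — precisely the excluded set. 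Hence for $\lambda\in\C\setminus(2\N+2+2n-2j)$ we get $S_{j,\lambda}^l\in\D'(\C^n\setminus\{0\})_{even,-\lambda}\simeq\D'(G/Q,\chi_\lambda)$, and, its base distribution being supported on $\{z_{j+1}=\dots=z_n=0\}$, which corresponds to $X_j$, we get $S_{j,\lambda}^l\in\D'_{X_j}(G/Q,\chi_\lambda)$. (For $j=n-1$ one has $\mu_j=\lambda$, $D_j=D$ and $S_{j,\lambda}^l=\Gamma(2-\tfrac{\lambda}{2})T_\lambda^l$, consistently with the remark preceding the Proposition.)

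Next I would check $H$-invariance, i.e.\ $S_{j,\lambda}^l\in\D'(G/Q,\chi_\lambda)^H$, by the mechanism of Lemma \ref{D} and Proposition \ref{construction}: it suffices to treat the generators $h(\theta)$ (immediate from $h(\theta)\cdot z=e^{i\theta}z$) and $h_k(a)$ for $a\in\C$, $1\le k\le n-1$. For $h_k(a)$ one computes $h_k(a)\cdot D_j=D_j+(\text{corrections})$ — for $k=1$ this is the verbatim analogue of Lemma \ref{D}, namely
$$
h_1(a)\cdot D_j=D_j+a\bigl(\ov{z}_{j-1}-\ov{a}z_j+|a|^2\ov{z}_{j+1}\bigr)\frac{\p}{\p z_{j-1}}-a\ov{z}_{j+1}\frac{\p}{\p z_{j+1}}
$$
— while $h_k(a)\cdot\bigl(|z_j|^{2-\mu_j}\prod_{m>j}\delta(z_m,\ov{z}_m)\bigr)=|z_j|^{2-\mu_j}\prod_{m>j}\delta(z_m,\ov{z}_m)$, since $h_k(a)$ fixes the subspace $\{z_{j+1}=\dots=z_n=0\}$ with unit Jacobian and $(h_k(a)^{-1}z)_j$ differs from $z_j$ only by terms in the ``delta-killed'' variables $z_{j+1},\dots,z_n$. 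The proof then rests on: (a) every $D_j^\alpha$-iterate of the base distribution involves only $z_j,\ov{z}_j,\ov{z}_{j-1}$ together with $\p_{z_{j+1}}^{\bullet}\delta(z_{j+1},\ov{z}_{j+1})$ and $\delta(z_m,\ov{z}_m)$, $m\ge j+2$; (b) the identities $z_m\delta(z_m,\ov{z}_m)=0$ and $\ov{z}_m\p_{z_m}^s\delta(z_m,\ov{z}_m)=0$ ($s\ge0$), the latter deduced from $(\ref{zovz})$; (c) the two summands of $D_j$ and all the correction terms commute pairwise. Combining exactly as in Proposition \ref{construction}, each correction term annihilates the relevant intermediate distribution — those involving $\p_{z_p}$ or $\p_{\ov{z}_p}$ with $p<j$ by (a), those carrying a factor $\ov{z}_{j+1}$ or $z_m$ with $m>j+1$ by (b) — so that $(h_k(a)\cdot D_j)^l$ and $D_j^l$ have the same effect on the base distribution, giving $h_k(a)\cdot S_{j,\lambda}^l=S_{j,\lambda}^l$.

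Finally I would prove that $\{S_{j,\lambda}^l\}_{l\in\N}$ is linearly independent modulo $\D'_{X_{j-1}}(G/Q,\chi_\lambda)^H$. Restrict everything to the open cone $\{z\in\C^n\setminus\{0\}:z_j\ne0\}$, which corresponds to an open $U\subset G/Q$ with $U\cap X_{j-1}=\emptyset$; any $F\in\D'_{X_{j-1}}(G/Q,\chi_\lambda)^H$ restricts to $0$ on $U$, so it suffices to see the $S_{j,\lambda}^l|_U$ are linearly independent. There $|z_j|^{2-\mu_j}$ is a nowhere-vanishing $C^\infty$ function, and since the two summands of $D_j$ commute,
\begin{multline*}
S_{j,\lambda}^l=\sum_{k=0}^{l}\binom{l}{k}\ov{z}_{j-1}^{\,k}\,z_j^{\,l-k}\,\frac{\p^k|z_j|^{2-\mu_j}}{\p\ov{z}_j^{\,k}}\\
\times\frac{\p^{l-k}}{\p z_{j+1}^{\,l-k}}\delta(z_{j+1},\ov{z}_{j+1})\cdot\prod_{m=j+2}^{n}\delta(z_m,\ov{z}_m),
\end{multline*}
so the highest order of $\p/\p z_{j+1}$ appearing in $S_{j,\lambda}^l$ is $l$, with coefficient $|z_j|^{2-\mu_j}z_j^{\,l}\prod_{m\ge j+2}\delta(z_m,\ov{z}_m)$, which is nonzero on $U$. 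Given a finite relation $\sum_l\alpha_lS_{j,\lambda}^l=0$ on $U$, take the largest $L$ with $\alpha_L\ne0$; collecting the coefficient of $\frac{\p^L}{\p z_{j+1}^L}\delta(z_{j+1},\ov{z}_{j+1})$ and invoking the vanishing criterion $(\ref{T=0})$ in the variable $z_{j+1}$ forces $\alpha_L|z_j|^{2-\mu_j}z_j^{\,L}\prod_{m\ge j+2}\delta(z_m,\ov{z}_m)=0$ on $U$, i.e.\ $\alpha_L=0$, a contradiction. Hence the images of the $S_{j,\lambda}^l$ in $\D'_{X_j}(G/Q,\chi_\lambda)^H/\D'_{X_{j-1}}(G/Q,\chi_\lambda)^H$ are linearly independent, which proves the Proposition.

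The hard part will be the $H$-invariance step: in contrast to the case $j=n-1$ handled in the paper, for general $j$ and for the higher generators $h_k(a)$ with $2\le k\le n-1$ one must enumerate all the correction terms arising in $h_k(a)\cdot D_j$ and verify, via (a)–(c), that each annihilates the appropriate intermediate distribution, and one must justify — with the same care as in Proposition \ref{construction}, since $|z_j|^{2-\mu_j}$ is singular at $z_j=0$ — that $h_k(a)$ leaves $|z_j|^{2-\mu_j}\prod_{m>j}\delta(z_m,\ov{z}_m)$ invariant and that the corrections can be discarded once the operator has been commuted past it. Everything else is routine bookkeeping with homogeneity degrees and the elementary identities for $\delta(z_m,\ov{z}_m)$.
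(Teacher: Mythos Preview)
Your proposal is correct and follows essentially the same route as the paper: you define the same vector field $D_j$ and the same base distribution (your $S_{j,\lambda}^l$ equals the paper's $T_{\lambda,j}^l$ up to the normalizing factor $\Gamma(n-j+1-\tfrac{\lambda}{2})^{-1}$, which you can omit since $\lambda$ is restricted to the complement of the pole set), and you reduce $H$-invariance to the generator computation exactly as in Proposition~\ref{construction}. Your explicit linear-independence argument modulo $\D'_{X_{j-1}}$ via restriction to $\{z_j\neq 0\}$ spells out what the paper leaves implicit in the claim $\mathrm{supp}(T_{\lambda,j}^l)=X_j$.
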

\begin{proof}
Let $D_{j}$ be a real analytic vector field on $\C^{n}\simeq\R^{2n}$ given by $D_{j}:=\ov{z}_{j-1}\frac{\p}{\p \ov{z}_{j}}+z_{j}\frac{\p}{\p z_{j+1}}$.
For $l\in\N$, we define a distribution $T^{l}_{\lambda,j}\in\D'(\C^{n}\backslash\{0\})$ with holomorphic parameter $\lambda\in\C$ by
\begin{eqnarray}
T^{l}_{\lambda,j}(z):=\frac{1}{\Gamma\left(n-j+1-\frac{\lambda}{2}\right)}
D_{j}^{l}
\left(
|z_{j}|^{2(n-j)-\lambda}
\prod_{k=j+1}^{n}
\delta(z_{k},\ov{z}_{k})
\right).
\label{TT3}
\end{eqnarray}
Then we have $T^{l}_{\lambda,j}\in\D'(\C^{n}\backslash\{0\})^{H}_{even,-\lambda}\simeq\D'(G/Q,\chi_{\lambda})^{H}$ 
in the same way as the case of $T^{l}_{\lambda}$. Moreover ${\rm supp}(T^{l}_{\lambda,j})=cl(Y_{2j-1})=X_{j}$ follows easily from $(\ref{TT3})$ if $\lambda\in\C\backslash(2\N+2+2n-2j)$. 
This completes the proof of Proposition \ref{suppinfty}.
\end{proof}
\begin{Remark}
\label{holonomic}
Let $G_{\C},Q_{\C}$ and $H_{\C}$ be complexifications of $G,Q$ and $H$, respectively.
Then if $\#(H_{\C}\backslash G_{\C}/Q_{\C})<\infty$, 
we have $\dim \D'(\C^{n}\backslash\{0\})^{H}_{even,-\lambda}<\infty$ for any $\lambda\in\C$ 
by the general theory of holonomic systems due to Sato-Kashiwara-Kawai \cite[Theorems\:5.1.7, and\:5.1.12]{K}.
Therefore we have $\#(H_{\C}\backslash G_{\C}/Q_{\C})=\infty$
because $\dim \D'(\C^{n}\backslash\{0\})^{H}_{even,-\lambda}=\infty$ by Proposition \ref{Dinfty}.
Alternatively we can show that $\#(H_{\C}\backslash G_{\C}/Q_{\C})=\infty$ by direct calculation
as below.
\begin{Proposition}
\label{Cinfty}
Suppose $G,Q$ are as in Theorem \ref{Main},
and $H$ is the subgroup of $G$ defined in (\ref{Hgroup}).
Let $G_{\C},Q_{\C}$ and $H_{\C}$ be complexifications of $G,Q$ and $H$, respectively.
Then if $n\geq 2$,
we have $\#(H_{\C}\backslash G_{\C}/Q_{\C})=\infty$.
\end{Proposition}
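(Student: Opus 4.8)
The plan is to identify the complexified homogeneous space $G_{\C}/Q_{\C}$ with the complex projective space $\mathbb{P}(\C^{2n})\simeq\C{\mathbb P}^{2n-1}$ --- on which $G_{\C}=SL(2n,\C)$ acts tautologically and $Q_{\C}$ is the stabilizer of a line --- and then to produce an $H_{\C}$-stable linear subspace $W\subset\C^{2n}$ carrying a non-constant $H_{\C}$-invariant rational function on $\mathbb{P}(W)$. Since an $H_{\C}$-invariant function is constant along $H_{\C}$-orbits, its non-constancy forces $H_{\C}$ to have infinitely many orbits on $\mathbb{P}(W)$, hence --- because $\mathbb{P}(W)\hookrightarrow\mathbb{P}(\C^{2n})$ is $H_{\C}$-equivariant and injective --- infinitely many orbits on $\C{\mathbb P}^{2n-1}$. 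As $H_{\C}$-orbits on $G_{\C}/Q_{\C}$ are the same as $(H_{\C},Q_{\C})$-double cosets in $G_{\C}$, this gives $\#(H_{\C}\backslash G_{\C}/Q_{\C})=\infty$.

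The first, and main, step is to describe $H_{\C}$ explicitly inside $SL(2n,\C)$. Complexifying the real vector space $\C^{n}\simeq\R^{2n}$ gives $\C^{2n}=V^{1,0}\oplus V^{0,1}$, the $\pm i$-eigenspaces of the $\C$-linear extension of multiplication by $i$; let $e_{1}^{+},\dots,e_{n}^{+}$ and $e_{1}^{-},\dots,e_{n}^{-}$ be the bases attached to $z_{1},\dots,z_{n}$ and to $\ov z_{1},\dots,\ov z_{n}$. The $\R$-linear action $(\ref{action})$, namely $z_{j}\mapsto e^{i\theta}z_{j}+\sum_{k\geq1}a_{k}\ve^{k}\cdot z_{j+k}$, decomposes into a $\C$-linear part (the even powers of $\ve$, which commute with $i$) and a $\C$-antilinear part (the odd powers); transporting this to $\C^{2n}$ and then letting the parameters vary independently --- so that $e^{i\theta}$ becomes an arbitrary $t\in\C^{\times}$ and each $\ov a_{k}$ an independent $b_{k}\in\C$ --- one finds that $H_{\C}$ consists of the maps
\begin{align*}
e_{m}^{+}&\mapsto t\,e_{m}^{+}+\sum_{k\ \mathrm{even}}a_{k}\,e_{m-k}^{+}+\sum_{k\ \mathrm{odd}}b_{k}\,e_{m-k}^{-},\\
e_{m}^{-}&\mapsto t^{-1}e_{m}^{-}+\sum_{k\ \mathrm{even}}b_{k}\,e_{m-k}^{-}+\sum_{k\ \mathrm{odd}}a_{k}\,e_{m-k}^{+},
\end{align*}
where the sums are finite, $e_{l}^{\pm}:=0$ for $l\leq0$, $t\in\C^{\times}$, and $a_{1},\dots,a_{n-1},b_{1},\dots,b_{n-1}\in\C$. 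That this family is the complexification of $\iota(H)$ follows from a dimension count: it is a connected complex subgroup of $SL(2n,\C)$ of complex dimension $2n-1=\dim_{\R}H$ containing $\iota(H)$ as a real form. (For the argument below one only needs the action of $H_{\C}$ on three specific vectors, which can be read off directly.)

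Granting this, set $W:=\C e_{1}^{+}\oplus\C e_{1}^{-}\oplus\C e_{2}^{-}$; this uses $n\geq2$. From the formulas above, $e_{1}^{+}\mapsto t\,e_{1}^{+}$, $e_{1}^{-}\mapsto t^{-1}e_{1}^{-}$ and $e_{2}^{-}\mapsto t^{-1}e_{2}^{-}+a_{1}e_{1}^{+}$, so $W$ is $H_{\C}$-stable and $\mathbb{P}(W)\simeq\C{\mathbb P}^{2}$ is an $H_{\C}$-stable subvariety of $\mathbb{P}(\C^{2n})\simeq G_{\C}/Q_{\C}$. In homogeneous coordinates $[p:q:r]$ on $\mathbb{P}(W)$ with respect to the basis $(e_{1}^{+},e_{1}^{-},e_{2}^{-})$, the $H_{\C}$-action reads $[p:q:r]\mapsto[\,pt+ra_{1}:qt^{-1}:rt^{-1}\,]$; hence $[p:q:r]\mapsto q/r$ is an $H_{\C}$-invariant rational function that is plainly non-constant on $\mathbb{P}(W)$, and the first paragraph yields $\#(H_{\C}\backslash G_{\C}/Q_{\C})=\infty$. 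The only delicate point in the whole argument is the first step --- keeping track of which blocks of the $2n\times2n$ matrix arise from the $\C$-linear and which from the $\C$-antilinear part of $(\ref{action})$, and which parameters decouple under complexification. Once $H_{\C}$ is pinned down the rest is a two-line computation; one can also lighten the bookkeeping by first restricting to the $H$-invariant subspace $\C e_{1}\oplus\C e_{2}\subset\C^{n}$ (an $R_{\ve}$-submodule), which reduces the whole statement to $n=2$, where $W$ is simply a hyperplane in $\C^{4}$.
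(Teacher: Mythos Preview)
Your proof is correct. Both your argument and the paper's identify $G_{\C}/Q_{\C}$ with $\C{\mathbb P}^{2n-1}$, compute $H_{\C}$ explicitly, and then exhibit a continuous $H_{\C}$-invariant separating infinitely many orbits; so the overall strategy is the same. The executions, however, differ in an instructive way.

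The paper describes $H_{\C}$ via the complexified Clifford-type algebra $R_{\ve}\otimes_{\R}\C\simeq(\C\oplus\ov{\C})\oplus(\C\oplus\ov{\C})\ve$, writes points of $\C{\mathbb P}^{2n-1}$ as $[(z_{1},w_{1}),\dots,(z_{n},w_{n})]$, and works at the \emph{high-index} end: on the hyperplane $\{w_{n}=0\}$ it shows that the ratio $z_{n-1}/z_{n}$ is $H_{\C}$-invariant, so that the loci $Y^{\zeta}_{2n-3}=\{w_{n}=0,\ z_{n}\neq0,\ z_{n-1}=\zeta z_{n}\}$ are pairwise disjoint $H_{\C}$-orbits of complex dimension $2n-3$. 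This yields extra geometric information (an explicit one-parameter family of codimension-two orbits, parallel to the real orbit stratification used elsewhere in the paper).

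You instead work at the \emph{low-index} end: you isolate the $3$-dimensional $H_{\C}$-stable subspace $W=\C e_{1}^{+}\oplus\C e_{1}^{-}\oplus\C e_{2}^{-}$ and observe that $q/r$ is a non-constant $H_{\C}$-invariant rational function on $\mathbb{P}(W)$. This is lighter: you never need the Clifford-algebraic bookkeeping, and your closing remark that one may first restrict to $\C e_{1}\oplus\C e_{2}$ and thereby reduce to $n=2$ makes the computation genuinely elementary. The cost is that you see only orbits inside a $\mathbb{P}^{2}$, with no information about their dimension or position inside the full flag variety; but for the bare statement $\#(H_{\C}\backslash G_{\C}/Q_{\C})=\infty$ that is immaterial. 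Your description of $H_{\C}$ and the three formulas $e_{1}^{+}\mapsto t\,e_{1}^{+}$, $e_{1}^{-}\mapsto t^{-1}e_{1}^{-}$, $e_{2}^{-}\mapsto t^{-1}e_{2}^{-}+a_{1}e_{1}^{+}$ are correct, and the dimension-count justification that this family is indeed $H_{\C}$ is adequate.
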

Before the proof of Proposition \ref{Cinfty},
we discuss the complexifications of $\C$ and $R_{\ve}$
in order to make calculation clear.
We write $\ov{\C}$ for the complex conjugate space of $\C$,
that is,
$\ov{\C}=\C$ as a set,
and scalar multiplication of $c\in\C$ given by $c\cdot v:=\ov{c}v$ for 
$v\in\ov{\C}$.
Then the complexification $\C\otimes_{\R}\C$ of $\C$
is isomorphic to $\C\oplus\ov{\C}$ 
as a $\C$-algebra by the following map:
\begin{eqnarray}
e_{-}\frac{a\otimes 1}{2}+
e_{+}\frac{c\otimes 1}{2}
&\mapsto&
(a,c)\quad (a,c\in\C),
\label{CC}
\end{eqnarray}
where $e_{\pm}:=1\otimes 1 \pm i\otimes i \in \C\otimes_{\R} \C$.
Here the multiplication of $\C\otimes_{\R}\C$ is given by
$(a\otimes b)\cdot (c\otimes d)=ac\otimes bd$.
Similarly, we define an isomorphism $R_{\ve}\otimes_{\R}\C=\left(\C\oplus\C\ve\right)\otimes_{\R}\C
\DistTo\left(\C\oplus\ov{\C}\right)\oplus\left(\C\oplus\ov{\C}\right)\ve$
as a $\C$-algebra by
\begin{eqnarray}
e'_{-}\frac{(a+b\ve)\otimes 1}{2}+
e'_{+}\frac{(c+d\ve)\otimes 1}{2}
\mapsto
(a,c)+(b,d)\ve\quad (a,b,c,d\in\C),
\label{RCC}
\end{eqnarray}
where $e'_{\pm}:=1\otimes 1 \pm i\otimes i \in (\C\oplus \C \ve)\otimes_{\R} \C$.
Then the multiplication on $\left(\C\oplus\ov{\C}\right)\oplus\left(\C\oplus\ov{\C}\right)\ve$ induced from this isomorphism  is given below,
$$
\left((a,c)+(b,d)\ve\right)
\left((a',c')+(b',d')\ve\right)
=(aa'+b\ov{d'},cc'+d\ov{b'})+(ab'+b\ov{c'},cd'+d\ov{a'})\ve
,$$
where $(a,c)+(b,d)\ve,(a',c')+(b',d')\ve\in\left(\C\oplus\ov{\C}\right)\oplus\left(\C\oplus\ov{\C}\right)\ve$.
Hereafter we identify $R_{\ve}\otimes_{\R}\C$ with $\left(\C\oplus\ov{\C}\right)\oplus\left(\C\oplus\ov{\C}\right)\ve$ via (\ref{RCC}). For the proof of Proposition \ref{Cinfty}, we need:
\begin{Lemma}
\label{ve}
The complexification of the representation of $R_{\ve}$ on $\C$ defined in (\ref{action}) 
is given below under the identifications of (\ref{CC}) and (\ref{RCC}),
$$
\left((a,c)+
(b,d)\ve\right)\cdot(z,w)
=(az,cw)+(b\ov{w},d\ov{z}),
$$
where $(a,c)+(b,d)\ve\in R_{\ve}\otimes_{\R}\C$ and $(z,w)\in\C\oplus\ov{\C}\simeq\C\otimes_{\R}\C$.
\end{Lemma}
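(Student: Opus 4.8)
The plan is to reduce the identity to a check on a small spanning set and then to read it off from the two algebra isomorphisms $(\ref{CC})$ and $(\ref{RCC})$. The representation being complexified is the $\R$-bilinear map $R_\ve\times\C\to\C$ of $(\ref{action})$, and its complexification is the unique $\C$-bilinear map $(R_\ve\otimes_\R\C)\times(\C\otimes_\R\C)\to\C\otimes_\R\C$ with $(r\otimes\mu)\cdot(z\otimes\nu)=(r\cdot z)\otimes\mu\nu$ on elementary tensors, where $r\cdot z$ is given by $(\ref{action})$. Since this pairing is $\C$-bilinear and the coordinates $(a,c)+(b,d)\ve$ and $(z,w)$ come from the $\C$-linear maps $(\ref{RCC})$ and $(\ref{CC})$, it suffices to verify the asserted formula on the $\C$-bases $\{1\otimes 1,\ i\otimes 1,\ \ve\otimes 1,\ i\ve\otimes 1\}$ of $R_\ve\otimes_\R\C$ and $\{1\otimes 1,\ i\otimes 1\}$ of $\C\otimes_\R\C$.

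Next I would do the bookkeeping for $(\ref{CC})$ and $(\ref{RCC})$. Using $\tfrac12(e_-+e_+)=1\otimes 1$ one finds directly from $(\ref{CC})$ that $z\otimes 1\mapsto(z,z)$, hence $1\otimes 1\mapsto(1,1)$ and $i\otimes 1\mapsto(i,i)$; likewise $\tfrac12(e'_-+e'_+)=1\otimes 1$ gives, from $(\ref{RCC})$, that $1\otimes 1\mapsto(1,1)+(0,0)\ve$, $i\otimes 1\mapsto(i,i)+(0,0)\ve$, $\ve\otimes 1\mapsto(0,0)+(1,1)\ve$ and $i\ve\otimes 1\mapsto(0,0)+(i,i)\ve$. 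With these translations in hand each of the (at most eight) generator pairs is a one-line check: on the left one computes $(r\cdot z)\otimes 1$ from $(\ref{action})$ and transports it through $(\ref{CC})$, while on the right one substitutes the coordinates of $r$ and of $z$ into $(az,cw)+(b\bar w,d\bar z)$. For instance $(\ve\otimes 1)\cdot(i\otimes 1)=\bar i\otimes 1=(-i)\otimes 1\mapsto(-i,-i)$, which agrees with $(0\cdot i,\,0\cdot i)+(1\cdot\bar i,\,1\cdot\bar i)=(-i,-i)$; the remaining cases are entirely similar.

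A more structural variant, which I would at least mention, runs along the decomposition $R_\ve=\C\oplus\C\ve$: the subalgebra $\C\subset R_\ve$ acts on $\C$ by the regular representation, so since $(\ref{CC})$ is an isomorphism of $\C$-algebras the complexified action of $\C\otimes_\R\C\cong\C\oplus\bar\C$ on the module $\C\otimes_\R\C$ is again the regular representation, that is, $(a,c)\cdot(z,w)=(az,cw)$ componentwise; and $\ve$ acts on $\C$ by conjugation, which anticommutes with multiplication by $i$ and therefore interchanges the two eigenspaces of $i\otimes 1$ that define the splitting $\C\otimes_\R\C=\C\oplus\bar\C$, so that $\ve$ becomes $(z,w)\mapsto(\bar w,\bar z)$; composing, $(b,d)\ve$ acts by $(z,w)\mapsto(b\bar w,d\bar z)$, and adding the two contributions yields the formula. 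In either route the only point requiring care is keeping track of the conjugate $\C$-structure on the second summand $\bar\C$ — a scalar $\lambda$ acts there as $\bar\lambda$ — but once $(\ref{CC})$ and $(\ref{RCC})$ have been evaluated on elementary tensors this is purely mechanical, and I do not expect a genuine obstacle.
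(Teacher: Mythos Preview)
Your proposal is correct. The paper itself omits the proof of this lemma, writing only ``This follows from easy calculation, hence we omit the proof,'' so your direct verification on the $\C$-basis $\{1\otimes 1,\ i\otimes 1,\ \ve\otimes 1,\ i\ve\otimes 1\}$ against $\{1\otimes 1,\ i\otimes 1\}$ is precisely the kind of computation the author has in mind, and your structural variant (recognizing the $\C$-summand as the regular representation and $\ve$ as the swap $(z,w)\mapsto(\bar w,\bar z)$ between the $\pm i$-eigenspaces of $i\otimes 1$) is a clean way to organize the same check.
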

This follows from easy calculation, hence we omit the proof.
\begin{proof}[Proof of Proposition {\rm \ref{Cinfty}}]
$M_{n}(R_{\ve}\otimes\C)$ acts on
$(\C\oplus \ov{\C})^{n}\simeq\C^{n}\otimes\C$ by left multiplication.
This action induces $\iota_{\C}:M_{n}(R_{\ve}\otimes \C)\DistTo M_{2n}(\C)$ in the same way as $\iota$ in (\ref{iota}).
Then the complexification of $H$ in $M_{n}(R_{\ve}\otimes\C)$ is the following:
\begin{eqnarray}
\hspace{-0.2cm}
H_{\C}\hspace{-0.1cm}:=\hspace{-0.1cm}
\left\{
h^{a}(\bm{A}):= \hspace{-0.1cm}\left(\begin{array}{cccc}
 \hspace{-0.2cm}(e^{ia},e^{i\ov{a}})&\hspace{-0.5cm}A_{1}\ve&\hspace{-0.2cm}\cdots \hspace{-0.1cm}&\hspace{-0.1cm}A_{n-1}\ve^{n-1}\\
 &\hspace{-0.4cm}(e^{ia},e^{i\ov{a}})&\hspace{-0.5cm}\ddots \hspace{-0.5cm}&\hspace{-0.2cm}\vdots\\
 &&\hspace{-0.5cm}\ddots\hspace{-0.5cm} &\hspace{-0.2cm}A_{1}\ve\\
 &&&\hspace{-0.1cm}(e^{ia},e^{i\ov{a}})
 \end{array}\hspace{-0.1cm}
\right)
\middle|\:\hspace{-0.1cm}
\begin{array}{c}
a\in\C\\
\hspace{-0.1cm}\bm{A}\in (\C\oplus\ov{\C})^{n-1}\hspace{-0.2cm}
\end{array}
\right\}\hspace{-0.1cm},
\label{HC}
\end{eqnarray}
where $\bm{A}=(A_{1},\dots,A_{n-1})\in (\C\oplus \ov{\C})^{n-1}$.
Similarly to the case of $H$ in $(\ref{Hgroup})$, $\iota(H_{\C})$ is a subgroup of $G_{\C}=SL(2n,\C)$ and we may identify $H_{\C}$ in $M_{n}(R_{\ve}\otimes\C)$ with
$\iota_{\C}(H_{\C})$ in $G_{\C}=SL(2n,\C)$. 
Let $\C^{\times}:=GL(1,\C)$ act on $\left(\C\oplus\ov{\C}\right)^{n}$
by scalar multiplication. Then, for $c\in\C^{\times}$ and 
$\left((z_{1},w_{1}),\dots,(z_{n},w_{n})\right)\in(\C\oplus\ov{\C})^{n}$, we have
$$c\cdot\left((z_{1},w_{1}),\dots,(z_{n},w_{n})\right)=\left((cz_{1},\ov{c}w_{1}),\dots,(cz_{n},\ov{c}w_{n})\right).$$
We put 
$X_{\C}:=\left(\left(\C\oplus\ov{\C}\right)^{n}\backslash\{0\}\right)/\C^{\times}$.
By regarding $(\C\oplus\ov{\C})^{n}$ as $\C^{2n}$, we have
$X_{\C}\simeq \C{\mathbb P}^{2n-1}\simeq G_{\C}/Q_{\C}$ and these isomorphisms induce a bijection:
$$H_{\C}\backslash X_{\C}\simeq H_{\C}\backslash G_{\C}/Q_{\C}.$$
On the other hand, the action of $H_{\C}$ on $(\C\oplus\ov{\C})^{n}$ is given below by Lemma \ref{ve} (See (\ref{HC}) for the definition of $h^{a}(\bm{A})\in H_{\C}$),
\begin{eqnarray*}
\hspace{-0.5cm}h^{a}(\bm{A})\cdot\begin{pmatrix}
(z_{1},w_{1})\\
\vdots\\
(z_{n-1},w_{n-1})\\
(z_{n},w_{n})\end{pmatrix}
\hspace{-0.4cm}&=&\hspace{-0.4cm}\begin{pmatrix}
(e^{ia}z_{1},e^{i\ov{a}}w_{1})+
\sum_{j=1}^{n-1}(a_{j},b_{j})\ve^{j}\cdot (z_{j+1},w_{j+1})
)\\
\vdots\\
\begin{aligned}
&(\:\:e^{ia}z_{n-1}+a_{1}\ov{w}_{n}&,&\:\:e^{i\ov{a}}w_{n-1}+b_{1}\ov{z}_{n}\hspace{-0.3cm}&)\\
&(\qquad e^{ia}z_{n}&,&\qquad e^{i\ov{a}}w_{n}&)
\end{aligned}
\end{pmatrix}
\hspace{-0.13cm},
%\label{actionHC}
\end{eqnarray*}
where $a\in\C$, $\bm{A}=(A_{1},\dots,A_{n-1})=\left((a_{1},b_{1}),\dots,(a_{n-1},b_{n-1})\right)\in \left(\C\oplus\ov{\C}\right)^{n-1}$ and  
$\left((z_{1},w_{1}),\dots,(z_{n},w_{n})\right)\in(\C\oplus\ov{\C})^{n}$.
For $\zeta\in\C$, we define a complex $(2n-3)$-dimensional submanifold of $X_{\C}$ by
$$Y^{\zeta}_{2n-3}:=\{(z_{j},w_{j})_{j=1}^{n}\in (\C\oplus\ov{\C})^{n}\mid w_{n}=0,\: z_{n}\neq0,\: z_{n-1}=\zeta z_{n}\}/\C^{\times}\:\:\subset X_{\C}.$$
Then for any $\zeta\in\C$, the group $H_{\C}$ leaves $Y^{\zeta}_{2n-3}$ invariant, and in fact it acts transitively.
Moreover if $\zeta\neq \mu$, $Y^{\zeta}_{2n-3}$ and $Y^{\mu}_{2n-3}$ have no intersection.
Therefore we have $\#(H_{\C}\backslash G_{\C}/Q_{\C})=\#(H_{\C}\backslash X_{\C})= \infty$.
\end{proof}
\end{Remark}
\section*{Ackowledgement}
The author is grateful to Professor Toshiyuki Kobayashi for his much helpful advice and constant encouragement and thanks my parents for their support.


\begin{thebibliography}{9}
\bibitem{SN}A. Aizenbud, D. Gourevitch, Schwartz functions on Nash manifolds,
 Int. Math. Res. Not. IMRN {\bf 5} (2008), Art. ID rnm 155, 37 pp.
 \bibitem{G}A. Aizenbud, D. Gourevitch, A. Minchenko, Holonomicity of relative characters and applications to multiplicity bounds for spherical pairs, arXiv:1501.01479v1, to appear in Selecta Math.
\bibitem{Bien}F. Bien, Orbit, multiplicities, and differential operators, Contemp. Math. {\bf 145}  (1993), Amer. Math. Soc. 199--227.
\bibitem{gelfand}I. M. Gelfand, G. E. Shilov, Generalized functions. Vol. I: Properties and operations, Academic Press (1964), New York xvii+423 pp.
\bibitem{subquo} Harish-Chandra, Representations of semisimple Lie groups. II, Trans. Amer. Math. Soc. {\bf 76} (1954), 26--65.
\bibitem{K} M. Kashiwara, Systems of Microdifferential Equations, Progr.  Math. {\bf 34} (1983),  Birkh$\ddot{{\rm a}}$user, xv+159 pp.
\bibitem{Kimelfeld}B. Kimelfeld, Homogeneous domains in flag manifolds,
J. Math. Anal. Appl. {\bf 121} (1987), 506--588.
\bibitem{RS} T.Kobayashi, Introduction to harmonic analysis on real spherical homogeneous spaces, Proceedings of the 3rd Summer School on Number Theory “Homogeneous Spaces and Automorphic Forms” in Nagano (F. Sato, ed.), 1995, 22--41 (in Japanese).
\bibitem{Kobshintani}T. Kobayashi, Shintani functions, real spherical manifolds, and symmetry breaking operators, Developments in Mathematics {\bf 37} (2014), 127--159. 
\bibitem{KO} T. Kobayashi, T. Oshima, Finite multiplicity theorems for induction and restriction, Adv. Math. {\bf 248} (2013), 921--944.
\bibitem{KS} T. Kobayashi, B. Speh, Symmetry Breaking for Representations of Rank One Orthogonal Groups, Mem. Amer. Math. Soc. {\bf 238} (2015), 118 pp.  
\bibitem{CA} T. Kobayashi, T. Yoshino, Compact Clifford--Klein forms of symmetric spaces-revisited, Pure and Appl. Math. Quarterly {\bf 1} (2005), 603--684. 
\bibitem{Matsuki}T. Matsuki, Orbits on flag manifolds, Proceedings of the International
Congress of Mathematicians, Kyoto 1990, Vol. II (1991), Springer-Verlag, 807--813.
\end{thebibliography}
\end{document}